\documentclass[11pt,reqno]{amsproc}
\usepackage[margin=1in]{geometry}
\usepackage{amsmath, amsthm, amssymb}
\usepackage{times, esint,stackrel}
\usepackage{enumitem, graphicx}
\usepackage{color}
\usepackage{enumitem}   
\usepackage{hyperref}
\usepackage{etoolbox}
\usepackage{mathrsfs}
\usepackage{framed}
\usepackage{subcaption,graphicx}

%Hyper refferences
\usepackage{color} 
\usepackage{xcolor} 
\usepackage{hyperref}

\makeatletter
\patchcmd\@thm
{\let\thm@indent\indent}{\let\thm@indent\noindent}%
  {}{}
\makeatother

\usepackage{etoolbox}
\expandafter\patchcmd\csname\string\proof\endcsname
{\normalparindent}{0pt }{}{}

\newenvironment{myquote}{\list{}{\leftmargin=2.2in\rightmargin=0in}\item[]}{\endlist}

\newcommand{\be}{\begin{equation}}
\newcommand{\ee}{\end{equation}}
\newcommand{\bea}{\begin{eqnarray}}
\newcommand{\eea}{\end{eqnarray}}
\newtheorem{thm}{Theorem}

\newtheorem{lemma}{Lemma}

\newtheorem{quest}{Question}

\newtheorem{example}{Example}
\theoremstyle{definition}
\newtheorem{remark}{Remark}

\newcommand{\rmd}{{\rm d}}

\newcommand{\bq}{\begin{equation}}
\newcommand{\eq}{\end{equation}}
\newcommand{\bqa}{\begin{eqnarray*}}
\newcommand{\eqa}{\end{eqnarray*}}

\def\beq{\begin{equation}}
\def\eeq{\end{equation}}

%calligraphic lower-case s 

%    Absolute value notation

%=========================================
%    Blank box placeholder for figures (to avoid requiring any
%    particular graphics capabilities for printing this document).

\title[Conjugate  and cut points in ideal fluid motion]{ Conjugate and cut points in ideal fluid motion}
\author{Theodore D. Drivas}
\address{ Department of Mathematics, Stony Brook University,
Stony Brook, NY, 11794}
\email{tdrivas@math.stonybrook.edu}
\author{Gerard Misio{\l}ek} 
\address{Department of Mathematics, University of Notre Dame, IN 46556, USA} 
\email{gmisiole@nd.edu} 
\author{Bin Shi}
\address{RISE-Lab, University of California, Berkeley,  CA 94720-1776}
\email{binshi@berkeley.edu}
\author{Tsuyoshi Yoneda} 
\address{Graduate School of Mathematical Sciences, University of Tokyo,  Komaba 3-8-1 Meguro, 
Tokyo 153-8914, Japan} 
\email{yoneda@ms.u-tokyo.ac.jp} 
\subjclass[2000]{Primary 58D05; Secondary 35Q31, 58B20, 76E30} 
\date{\today} 
\keywords{Euler equations, diffeomorphism groups, conjugate points, cut points, Arnold's stability criterion}

\begin{document}

\begin{abstract}
Two fluid configurations along a flow are conjugate if there is a one parameter family of geodesics (fluid flows) joining them to infinitesimal order.
 Geometrically, they can be seen as a consequence of the (infinite dimensional) group of volume preserving diffeomorphisms having sufficiently strong positive curvatures which `pull' nearby flows together.
Physically, they indicate a form of (transient) stability in the configuration space of particle positions: a family of flows starting with the same configuration deviate initially and subsequently re-converge (resonate) with each other at some later moment in time.  
Here, we first establish existence of conjugate points in an infinite family of Kolmogorov flows 
-- a class of stationary solutions of  the Euler equations -- on the rectangular flat torus of any aspect ratio. 
The analysis is facilitated by a general criterion for identifying conjugate points 
in the group of volume preserving diffeomorphisms.  
Next, we show non-existence of conjugate points along Arnold stable steady states 
on the annulus, disk and channel.  
Finally, we discuss cut points, their relation to non-injectivity of the exponential map 
(impossibility of determining a flow from a particle configuration at a given instant) 
and show that the closest cut point to the identity is either a conjugate point or 
the midpoint of a time periodic Lagrangian fluid flow.
\end{abstract}

\maketitle

\vspace{-7mm}
\begin{myquote}
{\hfill \emph{To Sasha, with admiration and respect.}}\\
\end{myquote}
\vspace{-3mm}

%%%%%%%%%%%%%%%%%%%%%%%%%%%%%
%%%%%%%%%%%%%%%%%%%%%%%%%%%%%%%%%
\section{Introduction} 

Hydrodynamics of ideal fluids is a beautiful example of an infinite-dimensional Riemannian geometry 
where fluid motions in a fixed domain $M$ 
are regarded as geodesics in the group of volume-preserving  diffeomorphisms 
$\mathscr{D}_\mu(M)$
equipped with a right-invariant (weak) Riemannian metric defined by the fluid's kinetic energy. 
This observation was made in a pioneering paper of Arnold \cite{arn} 
who laid out the foundations of a modern differential geometric framework. 
His work was soon followed by \cite{ebma}, \cite{omo}, \cite{shn0} and numerous others 
ranging from applications to classical problems of hydrodynamic stability, 
to extensive calculations of the curvature of various diffeomorphism groups, 
to the search for conjugate points 
and the study of analytic and geometric properties of the associated Riemannian exponential map. 
Since then the field has been under intensive study and many of the subsequent results 
can be found in the recent monograph of Arnold and Khesin \cite{arkh}. 

Arnold was led to this geometric picture through efforts to understand turbulence\footnote{See for example 
his 1982 Crafoord Prize lecture.}. 
The motivation came from Kolmogorov's famous 1958 seminar \cite{am60},
where he proposed to systematically study the problem of long-time, high-Reynolds number fluid
flows driven by forcing of strength commensurate with the viscosity (inverse Reynolds number).  
In this slightly viscous regime, it is natural to try to understand the structure of hydrodynamic attractors 
by means of the ideal (inviscid and incompressible) Euler equations.  
In Arnold's picture, guided by results in finite dimensions due to Smale, Anosov, Sinai among others, 
turbulence could be rationalized if these attractors were chaotic. 
Chaotic attractors are often characterized by extreme sensitivity to initial conditions 
(leading to practical impossibility of pathwise forecasting) and structural stability (allowing robust statistical predictions). 
Averaged characteristics of turbulence observed in experiments, 
such as those underlying the celebrated Kolmogorov 1941 theory and its cousins, 
might then be deduced from a deeper understanding of these attractors.

A canonical example of complex/chaotic behavior in a finite dimensional dynamical system 
is provided by the geodesic flow on a negatively curved surface. 
There, neighboring geodesics diverge exponentially and some may fill out the whole surface densely 
\cite{A69,H71}. 
Inspired by this picture, Arnold's initial hope was to identify attracting manifolds in the configuration space 
of particle positions for the Navier-Stokes dynamics which are covered by exponentially diverging orbits. 
The connection with and the appeal of the geometric approach is now clear: 
since Euler flows are geodesics on the infinite dimensional group of volume-preserving diffeomorphisms, 
curvature calculations can provide a means of identifying regions in the configuration space 
where these attracting manifolds may live. 
Somewhat surprisingly, it turned out that when the fluid domain $M$ is the flat two-torus $\mathbb{T}^2$ 
then sectional curvatures of $\mathscr{D}_\mu(\mathbb{T}^2)$, though negative in ``most directions", 
can take on either sign. 
However, we should point out here that the tangent vectors of the two-planes of positive curvature 
which Arnold discovered are just autonomous incompressible velocities 
and \emph{not} stationary solutions of the Euler equations. 
Thus, while they do shed light on the geometry of $\mathscr{D}_\mu(\mathbb{T}^2)$, 
these curvature calculations may not be directly relevant for the behavior of slightly viscous fluids 
and their attractors. 

The failure of the curvature to be non-positive led naturally to questions concerning 
geodesic deviation involving Jacobi fields and existence of conjugate points along geodesics in the group. 
From a physics point of view, conjugate points may represent a kind of ``relative Lagrangian stability" 
or ``resonance" in the particle configurations along different orbits. 
Namely, a family of fluid flows all starting at the identity diffeomorphism with slightly different initial velocities 
will diverge for a while at first but eventually will be pulled together due to positive curvature 
around the ``conjugate" point in configuration space. At this point, all such flows will have their Lagrangian particles 
-- think of markers of non-diffusive dye -- in roughly the same configuration. This is markedly different 
from the behavior on chaotic hyperbolic attractors, such as geodesic motion on negatively curved manifolds.

The fact that sectional curvatures of the diffeomorphism group are mostly negative makes it easier 
to identify fluid flows without conjugate points. 
Typical examples are the constant-pressure flows on domains of nonpositive curvature, see \cite{mis1}, 
as well as rotations of the unit disk in the plane \cite{pre2}. 
On the other hand, finding conjugate points along geodesics in diffeomorphism groups 
appears to be a harder task. 
While in 3D they seem to abound, 
see e.g., \cite{emp},  \cite{prewash}, \cite{shn}, 
in 2D they have been found so far only along rotations of the unit sphere in $\mathbb{R}^3$, see \cite{mis1}, 
certain zonal flows on the ellipsoid in $\mathbb{R}^3$  \cite{tauyon}, partial results with the Coriolis force on the
sphere \cite{tauyon2}
and, in the case when the fluid domain is flat, 
on a single stationary flow of the two-torus $\mathbb{T}^2$, see \cite{mis2}. 

Of particular interest among the stationary solutions of the Euler equations in 2D 
are the Kolmogorov flows which correspond to eigenfunctions of the Stokes operator on $M$. 
They were proposed by Kolmogorov as the ansatz for the forcing in the aforementioned setup 
to understand turbulence, see \cite{am60}. 
In this case, there always exists a `laminar' solution in the direction of the forcing, 
which is asymptotically stable at low Reynolds numbers 
and becomes unstable at sufficiently high Reynolds numbers \cite{MS61}.\footnote{The only exception 
is when the eigenfunction corresponds to the smallest eigenvalue on the domain. 
In this case, the laminar solution is the unique stationary solution at all Reynolds number which, moreover, 
attracts all orbits at long times.} 
Consequently, this laminar solution, along with its unstable manifold, is embedded in a hydrodynamic attractor 
for all Reynolds numbers, cf. \cite{BV83}. 
This strongly suggests\footnote{In fact, (a minor modification of) the work of Chepyzhov, Vishik and Zelik  \cite{cvz11} 
shows that the sequence of global attractors $\{\mathcal{A}^\nu\}_{\nu>0}$ for 2D Navier-Stokes with forcing strength 
commensurate with the viscosity $\nu$ converges to a so-called trajectory attractor for the unforced Euler equations. 
See also the work of Glatt-Holtz, Sverak and Vicol \cite{gsv15} for an analogous statement 
in a stochastically forced setting. 
This attractor is known to be supported on $H^1$ velocity fields 
whose corresponding vorticity is bounded (the Yudovich space). It is not expected to be supported on 
velocity fields with Sobolev regularity $s>n/2+1$, which is the setting in which the Euler equations 
can currently be understood geometrically. 
Thus, the precise connection with $\mathscr{D}_\mu^s(M)$ is not yet clear 
and building a geometric theory for Yudovich solutions may be of crucial importance 
for advancing our understanding of high-Reynolds number 2D turbulence.} 
that obtaining a clear picture of the geometry 
near the corresponding geodesic in $\mathscr{D}_\mu(M)$ 
will provide a window into the structure of a subset of the attractor at `infinite Reynolds number'. 
In fact, 
possibly for that reason, 
Arnold proposed (see \cite{arn-P}) to search for conjugate points within the Kolmogorov family. 
The existence of conjugate points would show that this region of the attractor is not uniformly hyperbolic 
and indicate a form of Lagrangian stability, making the long-time, small-viscosity dynamics 
more subtle than initially expected. 

In this paper we focus on two-dimensional fluids. 
First, we revisit the criterion introduced in \cite{mis2} and applied there to obtain the first example of 
a conjugate point in $\mathscr{D}_\mu(\mathbb{T}^2)$. 
This criterion generalizes readily to arbitrary Riemannian fluid domains, possibly with boundary. 
Using it we produce an infinite family of Kolmogorov flows admitting conjugate points 
when the domain is a rectangular torus $\mathbb{T}^2_\alpha$ with any aspect ratio $\alpha>0$. 
As the aspect ratio increases (elongating the domain in either direction) the family extends 
to include (nearly) the entire class of Kolmogorov flows. 
It is tempting to conjecture that, with the exception of unidirectional shear flows 
with the sinusoidal (or `cosinusoidal') velocity profiles, every Kolmogorov flow on $\mathbb{T}^2_\alpha$ 
gives rise to a stationary geodesic with conjugate points in the group $\mathscr{D}_\mu(\mathbb{T}^2_\alpha)$.

Next, we pursue the relation between Arnold's nonlinear stability criterion 
and conjugate points in the group $\mathscr{D}_\mu(M)$. 
According to Arnold's criterion, a stationary flow of an ideal fluid is Lyapunov stable 
if the quadratic form given by the second variation of the kinetic energy restricted to coadjoint orbits 
in the algebra of smooth divergence-free vector fields is positive or negative definite, 
see e.g., \cite{arn}, \cite{arkh}, \cite{bur}. 
Using strong rigidity properties of \cite{cdg} together with known conditions for non-conjugacy, 
we deduce that for a  number of simple domains including the disk, annulus and straight channel, 
no Arnold stable steady states possess conjugate points. 
In light of these examples, we speculate that this should be true more generally.

Finally, we investigate the relation between cut points along fluid trajectories, defined and studied by Shnirelman, 
and the conjugate points. Roughly, a cut point is the first point beyond which a given geodesic 
no longer minimizes the energy functional among all paths connecting it to the identity. 
One distinction between a cut point and a conjugate point is that the former does not require any information 
on growth of nearby deviations (studied via the Jacobi fields) and therefore need not imply any 
``relative Lagrangian stability" for an infinite family of fluid flows. 
A simple example of a cut point which is not a conjugate point is a solid body rotation of the unit disk. 
To further illustrate their relation we also prove a geometric property of the cut locus 
which is similar to that in finite dimensions.

%%%%%%%%%%%%%%%%%%%%%%% 
%%%%%%%%%%%%%%%%%%%%%%%%%%% 
\section{The kinetic energy metric, its exponential map and  geodesics} 
\label{sec:KE} 

First, we briefly review the basic differential-geometric constructions of hydrodynamics 
and recall some known results we will need in later sections. 
Let $(M, g)$ be a compact two-dimensional Riemannian manifold possibly with boundary, 
e.g., the flat two-torus $\mathbb{T}^2$, the sphere in $\mathbb{R}^3$ with the round metric 
or a two-dimensional bounded domain with smooth boundary. 
Let $\mathscr{D}_\mu^s$ denote the completion in the Sobolev $H^s$ topology of 
the group of all $C^\infty$ diffeomorphisms of $M$ preserving its Riemannian volume form $\mu$. 
If $s > 2$ then it is well known that $\mathscr{D}_\mu^s$ is a submanifold of the smooth Hilbert manifold 
$\mathscr{D}^s$ of all $H^s$ diffeomorphisms of $M$. 
Both spaces can be equipped with a (weak) Riemannian metric which at a point $\eta$ 
is given by 
\begin{equation} \label{eq:L2metric} 
\langle V, W \rangle_{L^2} 
= 
\int_M \langle V(x), W(x) \rangle_{\eta(x)} d\mu 
\end{equation} 
for any $V, W \in T_\eta\mathscr{D}^s$. 
This metric is invariant under the action of $\mathscr{D}_\mu^s$ by right multiplications 
and corresponds to the fluid's total kinetic energy. 
According to the ``least action principle", its geodesics are the critical points 
that minimize the associated $L^2$ energy functional 
\begin{align} \label{eq:energy} 
\mathcal{E}(\gamma) 
= 
\frac{1}{2} \int_0^T \| \dot{\gamma}(t)\|_{L^2}^2 dt 
\end{align} 
of a path $\gamma(t)$ in $\mathscr{D}_\mu^s$ between $t=0$ and $t=T$ 
or, alternatively, the $L^2$ length functional 
\begin{align} \label{eq:length} 
\mathcal{L}(\gamma) 
= 
\int_0^T \| \dot{\gamma}(t)\|_{L^2} dt 
\end{align}
if $\gamma$ is given the $L^2$ unit speed parametrization. 
The geodesic equations when right-translated to the Lie algebra $T_e\mathscr{D}_\mu^s$ 
of divergence-free vector fields 
(i.e., the tangent space at the identity diffeomorphism $e$) 
take on the familiar form of the Euler equations of an incompressible and inviscid fluid in $M$, namely 
\begin{align} \label{eq:Euler} 
&u_t + u\cdot \nabla u = - \mathrm{grad}\, p 
\\ 
&\mathrm{div}\, u = 0 
\end{align} 
where $u = \dot{\gamma}\circ\gamma^{-1}$ is the velocity field of the fluid 
and $u{\cdot}\nabla$ denotes the covariant derivative of the metric $g$ in the direction of $u$. 
When boundaries are present then we require, in addition, that $u{\parallel}\partial M$. 
The pressure gradient on the right hand side is determined uniquely by the incompressibility constraint as 
$\mathrm{grad}\, p = Q_e(u{\cdot}\nabla u)$ 
where the formulas 
\begin{equation} \label{eq:Q} 
Q_e = \mathrm{grad}\, \Delta^{-1} \mathrm{div} = \mathrm{Id} - P_e 
\end{equation} 
define the usual $L^2$ orthogonal projections $P$ and $Q$ at the identity $e$ 
onto the subspace of gradient and divergence-free vector fields 
in the Weyl-Helmholtz-Hodge decomposition on $M$. 

We should point out that the Lagrangian formulation of fluid equations 
(as geodesic equations in a Banach manifold) 
has a technical advantage over the Eulerian (as the system \eqref{eq:Euler} of nonlinear PDE). 
Since the geodesic equations in $\mathscr{D}_\mu^s$ are an ODE their solutions are obtained 
using Picard's method of successive approximations 
and as such they depend smoothly on initial data. 
Consequently, 
the right-invariant $L^2$ metric possesses a smooth exponential map 
along with a Levi-Civita connection $\nabla$
and a curvature tensor $\mathcal{R}$ which is a bounded multilinear operator 
on each tangent space to $\mathscr{D}_\mu^s$. 
The $L^2$ exponential map is defined in the standard way as the solution map of 
the geodesic equations: 
it maps lines through the origin in a tangent space at a given fluid configuration (fluid velocities) 
onto geodesics in the diffeomorphism group (fluid flows). 
More precisely, at the identity diffeomorphism, we set 
\begin{equation} \label{eq:exp} 
\mathrm{exp}_e : T_e\mathscr{D}_\mu^s \to \mathscr{D}_\mu^s, 
\qquad 
\mathrm{exp}_e{tu_0} = \gamma(t), 
\quad 
t \in \mathbb{R} 
\end{equation} 
where $\gamma(t)$ is the unique $L^2$ geodesic starting from $e$ with velocity $u_0$. 
As in finite dimensions $\mathrm{exp}_e$ is a local diffeomorphism near $e$ by the inverse function theorem. 
Since $M$ is two-dimensional it is defined on the whole tangent space, 
essentially as a consequence of the classical result of Wolibner \cite{wol}. 
In fact, Shnirelman \cite{shn12} proved that $\exp_e$ is a real analytic map 
provided that the fluid domain is an analytic manifold.  
As a corollary he recovered the statement that Lagrangian trajectories are analytic functions of time 
at fixed labels, which apparently was overlooked but implicit in the work of Lichtenstein\footnote{A. Shnirelman, private communication.} \cite{Lich25},  proved first in the literature by Serfati \cite{ser} and subsequently extended by various authors 
(see e.g. \cite{ckv,frisch}). He also showed
that streamlines of stationary Euler solutions are analytic curves, 
extending work of Nadirashvili \cite{Nad}. 

Roughly speaking, \emph{conjugate points} are the singular values of the exponential map 
and \emph{conjugate vectors} are the corresponding elements of the tangent space. 
In general, in infinite dimensions one has to contend with two different kinds of these points 
depending on whether the derivative of the exponential map fails to be one-to-one (a monoconjugate point) 
or fails to be onto (an epiconjugate point). 
However, while both species occur along fluid motions in 3D this is not so in 2D hydrodynamics 
because in this case the $L^2$ exponential map is known to be a nonlinear Fredholm map of index zero, 
see \cite{emp}. 
In particular, mono- and epiconjugate points coincide and therefore display many of the same properties 
as their counterparts in the finite dimensional Riemannian geometry 
- 
%they are isolated without cluster points along the $L^2$ geodesics, 
%which, in turn, cannot minimize the $L^2$ distance past the first conjugate point etc.. 
in the sequel we will refer to them simply as conjugate points. 
The set of all conjugate points of the identity in $\mathscr{D}_\mu^s$ is called the \emph{conjugate locus} of $e$ 
and the set of the corresponding vectors in $T_e\mathscr{D}_\mu^s$ will be denoted by $\mathcal{C}_e$. 

The Fredholm property of the exponential map on $\mathscr{D}_\mu^s$ has several important consequences. 
Below we list some of them along with other results that we shall need in what follows. 
\begin{enumerate} 
\item[(i)](\emph{The Index Theorem in 2D hydrodynamics}) 
If $\gamma(s,t)$ ($-\varepsilon<s<\varepsilon$) is a variation with fixed ends 
of an $L^2$ geodesic $\gamma(t)$ from the identity in $\mathscr{D}_\mu^s$ 
and $V(t) = \partial \gamma/\partial s (0,t)$ 
is the associated variation field 
then the first and the second variations of the energy functional read 
\begin{align} \label{eq:var-1} 
\mathcal{E}'(\gamma)(V) 
= 
\langle V, \dot{\gamma} \rangle_{L^2} \big|_{_{t=0}}^{^{t=T}} 
- 
\int_0^T \langle V, \nabla_{\dot{\gamma}}\dot{\gamma} \rangle_{L^2} dt 
\end{align} 
and 
\begin{align} \label{eq:var-2} 
\mathcal{E}''(\gamma)(V,V) 
= 
\int_0^T \big( \| \nabla_{\dot{\gamma}} V \|_{L^2}^2 
- 
\langle \mathcal{R}_\gamma(V, \dot{\gamma})\dot{\gamma}, V \rangle_{L^2} \big) dt 
\end{align} 
where $\mathcal{R}_\gamma$ is the $L^2$ curvature tensor along $\gamma$. 
There is also a corresponding formula for $\mathcal{L}$. 
The integral in \eqref{eq:var-2} induces on the space of vector fields on $\gamma$ 
that vanish at the endpoints, 
a symmetric bilinear form called the \emph{index form}. 
The maximum dimension of the subspace %of such vector fields 
on which the index form is negative definite is finite for any finite geodesic segment 
and equals the number of conjugate points to $e$ along $\gamma$ 
counted with multiplicity\footnote{The multiplicity of a conjugate point is the dimension of 
the kernel of the differential of $\mathrm{exp}_e$ at that point.}, 
cf. \cite{mispre}. 
Among immediate consequences is that conjugate points along fluid flows 
in 2D hydrodynamics are isolated and that flows do not minimize the $L^2$ length 
past their first conjugate point. 
\item[(ii)](\emph{Covering properties of $\mathrm{exp}_e$}) 
The $L^2$ exponential map is a covering space map on the open connected component 
$\mathcal{U} \subset \mathscr{D}_\mu^s$ of the identity $e$ whose $L^2$ diameter is infinite, 
cf. \cite{mispre}. 
While it is not yet known whether this connected component is in fact the whole group, 
if resolved in the affirmative, it could be considered as a hydrodynamical Hopf-Rinow theorem. 
\end{enumerate} 

A related notion in classical Riemannian geometry is that of a \emph{cut point}
defined as the first point beyond which a geodesic starting from a given point ceases to minimize 
the length functional (or the energy functional) among all paths connecting the two points. 
As such, it always appears before or at the first conjugate point. 
The set of all cut points of a given point is called its \emph{cut locus}. 
In Chapter 4 of \cite{arkh}, Shnirelman introduced this notion for the $L^2$ geodesics. 
He also called such a point a (first) \textit{local cut point} if shorter paths can be chosen arbitrarily close to 
the given geodesic in the manifold topology. 
He proved that any sufficiently long geodesic in $\mathscr{D}_\mu^s(M)$ 
will contain a local cut point if $\dim{M}=3$. 
The argument uses generalized flows of Brenier, see \cite{shn}. 

Shnirelman also points out that a precise relation between cut points and conjugate points 
in geometric hydrodynamics remains unclear. 
In fact, even in finite dimensions the structure of the cut locus is highly complicated and nontrivial to describe. 
In infinite dimensions the difficulties are further compounded. 
For example, the known classical proofs of the characterization of a cut point as 
either the first conjugate point or the meeting point of distinct minimizing geodesics 
rely on local compactness and do not carry over directly to the setting of diffeomorphism groups.
Nevertheless, something interesting can be said about these notions in the 2D case. 
\begin{enumerate} 
\item[(iii)](\emph{Non-injectivity of $\mathrm{exp}_e$ near conjugate points}) 
The $L^2$ exponential map is never one-to-one in any neighbourhood of its conjugate point, 
cf. \cite{mis3}. 
This statement can be viewed as a hydrodynamical analogue of the classical theorem of Morse and Littauer. 
It has the following interpretation for 2D fluid motions: 
in every $H^s$ neighborhood of a conjugate point there will be fluid configurations 
which can be reached from the initial configuration $e$ by at least two distinct fluid flows 
in the same amount of time. 
\end{enumerate} 
\begin{figure}[h!]
   \centering
\includegraphics[width=0.45\textwidth]{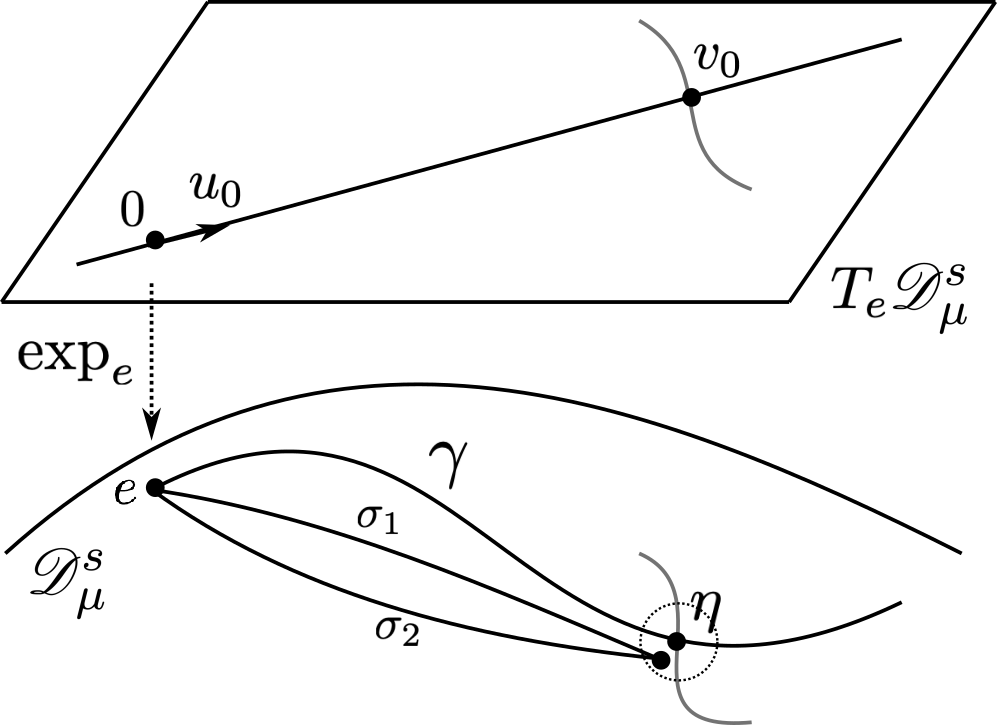}
    \caption{At least two distinct geodesics $\sigma_1$, $\sigma_2$ 
                  intersecting near a conjugate point; cf. Remark \ref{rem:1}.}   \label{fig:thm3}
\end{figure} 
\begin{remark} \label{rem:1} 
In fact, we can say more. 
As a simple application of the covering properties (ii) and non-injectivity of the exp map (iii) 
we can show that 
if $\eta$ is the first conjugate point to $e$ on a geodesic $\gamma(t)$ in $\mathcal{U}$ 
then there are infinitely many geodesics from $e$ which in any $H^s$ neighbourhood of $\eta$ 
meet another geodesic in the same amount of time. 
To prove this one can assume that $\eta = \mathrm{exp}_e{v_0}$ is the image in $\mathcal{U}$ 
of a \emph{regular} conjugate vector in $\mathcal{C}_e$, that is, 
one that has a neighbourhood in $T_e\mathscr{D}_\mu^s$ whose intersection with each ray from the origin 
(if nonempty) contains precisely one conjugate vector of the same multiplicity as $v_0$. 
For example, all conjugate vectors of multiplicity one are of this type. 
Using the Index theorem (i) one can show that
the set of all regular conjugate vectors of $\mathrm{exp}_e$ is a smooth codimension 1 
submanifold of $T_e\mathscr{D}_\mu^s$ which forms an open and dense subset of 
the full conjugate locus $\mathcal{C}_e$, see \cite{lich}. 
It then follows that we can pick a sequence $v_{0,n}$ in $\mathcal{C}_e$ converging to $v_0$ 
in the $H^s$ norm and 
applying (ii) and (iii) for each $n$ we find that the $L^2$ exponential map 
cannot be injective near any of the conjugate points $\eta_n = \mathrm{exp}_e{v_{0n}}$ 
along the corresponding geodesics $\gamma_n(t)$ in $\mathscr{D}_\mu^s$. 
\end{remark}

A different result concerning the cut locus and closed geodesics in $\mathscr{D}_\mu^s$ 
will be proved in Section \ref{sec:cut} below. 

\smallskip 
\noindent One final fact concerning the $L^2$ geodesics in $\mathscr{D}_\mu^s$ 
(or equivalently, fluid motions in $M$) is 
\begin{enumerate} 
\item[(iv)](\emph{Absence of self-intersecting geodesics}) 
The result of Ebin \cite{eb} shows that geodesics in $\mathscr{D}_\mu^s(M)$ 
cannot have self-intersections at least when the underlying manifold $M$ is the flat 2-torus $\mathbb{T}^2$ 
or a simply connected planar domain with smooth boundary.
It also holds for domains that admit at least one Killing field. 
It is not known if this property has any global implications for the geometry of the right-invariant $L^2$ metric 
on $\mathscr{D}_\mu^s$. 
\end{enumerate} 
% 

%%%%%%%%%%%%%%%%%%%%%%%%%%%% 
%%%%%%%%%%%%%%%%%%%%%%%%%%%%%%%% 
\section{A sufficient condition for existence of conjugate points} 
\label{sec:conj_points} 

%Recall that in 2D hydrodynamics we can define conjugate points along fluid flows 
%as the singularities of the $L^2$ exponential map on $\mathscr{D}_\mu^s$. 
There are two principal methods for computing conjugate points along geodesics. 
The first looks for nontrivial solutions of the associated Jacobi equation that vanish at two distinct points. 
The second, more geometric, relies on constructing a one-parameter family of geodesics 
which come together at two end points to infinitesimal order. 
In fact, either method can be used to define a (mono-) conjugate point 
but neither is easily applicable in the case of diffeomorphism groups. 
While they can be used to good effect in certain special situations, 
the former typically gets bogged down quickly in complicated calculations 
and the latter requires some prior insight into the geometry of the neighbourhood of the geodesic. 

We may settle for less and look for a way of detecting conjugate points 
without trying to precisely locate them along a given geodesic. 
%Riemannian comparison theorems, developed to control the metric and its geodesics 
%by means of suitable curvature bounds, provide a convenient tool for this purpose. 
%However, applying these geometric techniques directly in hydrodynamics leads to difficulties. 
Classical Riemannian geometry developed methods to control the metric and its geodesics 
by means of suitable curvature bounds. 
However, standard techniques such as those based on Rauch's Comparison theorems 
are only of limited use in hydrodynamics. 
For example, it is known that for any underlying manifold 
the $L^2$ curvature of the group of volume-preserving diffeomorphisms $\mathscr{D}_\mu^s$ 
can assume positive as well as negative signs. 
In fact, given any $u \in T_e\mathscr{D}_\mu^s$ the minimum value of the sectional curvature 
on the two-planes containing $u$ is always strictly negative, 
unless $u$ is a Killing field, in which case it is zero, cf. \cite{klmp}; Thm 5.1. 
In light of this fact we turn elsewhere for help in establishing existence of conjugate points. 
One useful sufficient condition can be extracted from the method devised in \cite{mis2} 
to produce the first example of a conjugate point in $\mathscr{D}_\mu^s(\mathbb{T}^2)$. 
This condition was recently revisited and applied in \cite{tauyon} to zonal flows 
on an ellipsoid in $\mathbb{R}^3$. More precisely, we have
\begin{thm} \label{conjpointthm} 
Let $M$ be a compact two-dimensional Riemannian manifold. 
Let $u_0$ be a stationary solution of the Euler equations \eqref{eq:Euler}. 
Suppose that there is $v \in T_e\mathscr{D}_\mu^s$ with $ \| v \|_{L^2}=1$ 
such that 
\begin{align} \label{eqn:mc} 
m_c^{u_0,v} 
&:= 
\big\langle [u_0,v] {\cdot} \nabla u_0 + u_0 {\cdot} \nabla [u_0,v], v \big\rangle_{L^2} > 0.
\end{align} 
Then the geodesic in $\mathscr{D}_\mu^s$ from the identity $e$ in the direction $u_0$ 
has a conjugate point. 
Moreover, this conjugate point must occur at a time $t \leq t_c:=  \pi \sqrt{2/m_c^{u_0,v} }$.
\end{thm}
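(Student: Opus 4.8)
The plan is to use the second variation formula \eqref{eq:var-2} together with the Index Theorem (i): it suffices to exhibit a single vector field $V(t)$ along the geodesic $\gamma(t) = \exp_e tu_0$, vanishing at $t=0$ and $t=T$ for some $T \le t_c$, on which the index form is negative. Since $u_0$ is stationary, the geodesic is the one-parameter subgroup $\gamma(t) = \exp_e tu_0$ and everything can be computed by right-translating to the Lie algebra. The natural ansatz, following the method of \cite{mis2}, is to take $V$ to be (the right translate of) a fixed algebra element modulated by a scalar function of time, roughly $V(t) = f(t)\, w(t)$ where $w(t)$ solves the linearized flow and $f$ is a sine bump $f(t) = \sin(\pi t/T)$ vanishing at the endpoints. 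The key algebraic input is that along the stationary geodesic the covariant derivative $\nabla_{\dot\gamma}$, after translation to the algebra, acts on a time-independent field $v$ essentially through the operator $v \mapsto P_e(u_0 \cdot \nabla v - v \cdot \nabla u_0)$, i.e. via the commutator $[u_0,v]$ up to the Hodge projection, and that the curvature term $\langle \mathcal{R}(v,u_0)u_0, v\rangle$ can be rewritten — using the explicit formula for the $L^2$ curvature of $\mathscr{D}_\mu^s$ due to Arnold and the stationarity $P_e(u_0\cdot\nabla u_0)=0$ — in terms of exactly the quantity $m_c^{u_0,v}$ appearing in \eqref{eqn:mc}.

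Concretely, I would first record the two standard identities: the geodesic (Euler) equation for the stationary $u_0$, which gives $u_0 \cdot \nabla u_0 = -\grad p$, i.e. $P_e(u_0\cdot\nabla u_0) = 0$; and the formula for the curvature operator, $\langle \mathcal{R}_e(v,u_0)u_0, v\rangle_{L^2}$, expanded via Arnold's expression involving the bilinear operator $B(a,b) = P_e(b\cdot\nabla a)$ (or its adjoint). Substituting the stationarity relation collapses several terms and, after an integration by parts in space and use of the divergence-free condition, one is left with an expression whose sign is governed by $\langle [u_0,v]\cdot\nabla u_0 + u_0\cdot\nabla[u_0,v], v\rangle_{L^2}$. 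Then, plugging the test field $V(t) = \sin(\pi t/T)\, \widetilde{v}(t)$ into \eqref{eq:var-2}, where $\widetilde v(t)$ is the appropriate time-dependent lift of $v$, the $\|\nabla_{\dot\gamma}V\|^2$ term produces $\frac{\pi^2}{T^2}\int_0^T \cos^2(\pi t/T)\,\|v\|_{L^2}^2\,dt = \frac{\pi^2}{2T}$ plus cross terms that vanish or are controlled, while the curvature term produces $-\int_0^T \sin^2(\pi t/T)\, m_c^{u_0,v}\, dt = -\frac{T}{2} m_c^{u_0,v}$. Hence $\mathcal{E}''(\gamma)(V,V) \le \frac{\pi^2}{2T} - \frac{T}{2} m_c^{u_0,v}$, which is negative as soon as $T > \pi/\sqrt{m_c^{u_0,v}}$; choosing $T = t_c = \pi\sqrt{2/m_c^{u_0,v}}$ (the factor of $2$ absorbing the cross terms) makes the index form negative, so there is a conjugate point at some time $\le t_c$ by the Index Theorem.

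The step I expect to be the main obstacle is the precise bookkeeping showing that, after inserting the time-modulated test field and integrating by parts in $t$, all the ``cross terms'' — those mixing $\dot f(t)\widetilde v$ with $f(t)\nabla_{\dot\gamma}\widetilde v$, and the time-variation of $\widetilde v(t)$ itself — either cancel by symmetry (odd/even in $t$ about the midpoint, or total derivatives integrating to zero against the vanishing boundary data) or can be bounded so as to be absorbed into the factor $\sqrt{2}$ in the definition of $t_c$. This requires being careful about the distinction between the fixed algebra element $v$ and its lift $\widetilde v(t)$ along $\gamma$, and about how the Hodge projection $P_e$ interacts with $\nabla_{\dot\gamma}$; essentially one needs the identity that right-translation of the covariant derivative along a one-parameter subgroup reduces to the algebra operator $\mathrm{ad}$ plus projection, a computation already implicit in \cite{mis2}. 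I would carry this out by working throughout in the Lie algebra via the right-invariance of the metric, reducing \eqref{eq:var-2} to a fixed-time integral of algebra-valued quantities, so that the only $t$-dependence is through the scalar $f(t)$; at that point the estimate above is immediate and the value $t_c = \pi\sqrt{2/m_c^{u_0,v}}$ drops out.
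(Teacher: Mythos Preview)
Your approach is the same as the paper's, and the final inequality you arrive at is correct, but you have misidentified where the real work lies. Two points:

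First, the lift is simpler than you suggest: one takes $\widetilde v(t) = v\circ\gamma(t)$, the plain right translate of the \emph{fixed} algebra element $v$; no linearized flow is involved. With this choice $\nabla_{\dot\gamma}V = f'\, v\circ\gamma + f\, P_e(u_0\cdot\nabla v)\circ\gamma$, and the cross term $2ff'\langle v, P_e(u_0\cdot\nabla v)\rangle_{L^2}$ vanishes \emph{identically} because $\langle v, u_0\cdot\nabla v\rangle_{L^2} = \tfrac{1}{2}\int_M u_0\cdot\nabla|v|^2\,d\mu = 0$ by $\mathrm{div}\,u_0=0$. So there are no cross terms to control or absorb, and the factor $\sqrt{2}$ in $t_c$ plays no such role (any $T>\pi/\sqrt{m_c}$ already gives a negative index form; the paper's choice is just convenient).

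Second, the actual obstacle is different from what you anticipate. The computation of $\|\nabla_{\dot\gamma}V\|_{L^2}^2$ leaves not a cross term but a \emph{nonnegative} remainder $f^2\|P_e(u_0\cdot\nabla v)\|_{L^2}^2$, which a priori works against you. Separately, the curvature $\langle\mathcal{R}_e(v,u_0)u_0,v\rangle_{L^2}$ is \emph{not} equal to $m_c^{u_0,v}$. The key identity, which the paper derives via the Gauss--Codazzi equations for the embedding $\mathscr{D}_\mu^s\subset\mathscr{D}^s$ together with the stationarity $P_e(u_0\cdot\nabla u_0)=0$, is
\[
\langle\mathcal{R}_e(v,u_0)u_0,v\rangle_{L^2} \;=\; m_c^{u_0,v} \;+\; \|P_e(u_0\cdot\nabla v)\|_{L^2}^2,
\]
so that the unwanted remainder from $\|\nabla_{\dot\gamma}V\|^2$ cancels \emph{exactly} against the excess in the curvature, leaving the clean integrand $f'^2 - m_c f^2$. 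This cancellation, not any cross-term bookkeeping, is the heart of the argument.
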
 
\begin{remark} 
In fact, as will be seen in the proof below, we have 
\be
m_c^{u_0,v} 
= 
\langle \mathcal{R}_e(v, u_0) u_0, v\rangle_{L^2} 
- 
\| P_e ( u_0{\cdot} \nabla v)\|_{L^2}^2
\ee 
where $ \mathcal{R}_e$ is the $L^2$ curvature tensor of $\mathscr{D}^s_\mu$ at the identity $e$ 
and $P_e$ is the projector onto divergence-free vector fields.
Observe that positivity of $m_c^{u_0,v}$ translates into a strict lower bound 
on the sectional curvature away from zero. 
It is worth recalling the example of a paraboloid of revolution in $\mathbb{R}^3$ 
which has strictly positive sectional curvature but with no uniform lower bound and consequently  geodesics emanating  from the vertex have no conjugate points. 
\end{remark} 
\begin{proof} 
Let $\gamma(t)$ be the one-parameter subgroup in $\mathscr{D}_\mu^s$ generated by $u_0$ 
and hence also a critical point of the energy functional $\mathcal{E}$ 
with respect to any variation of $\gamma$ with fixed end points. 
The basic idea is to produce a vector field along $\gamma$ and using the group structure and 
some submanifold geometry show that the second variation of $\mathcal{E}$ 
evaluated on this vector field is strictly negative. 

To this end, recall that on the full diffeomorphism group $\mathscr{D}^s$ 
the $L^2$ inner product \eqref{eq:L2metric} also induces a smooth Levi-Civita connection $\bar{\nabla}$ 
whose sectional curvature on any 2-plane in $T_e\mathscr{D}^s$ is completely determined 
by that of the underlying Riemannian manifold 
and given explicitly as the integral over $M$ of its pointwise sectional curvature $R^{_M}$. 
Furthermore, since $\mathscr{D}_\mu^s$ is a smooth submanifold of $\mathscr{D}^s$, 
the sectional curvatures of the two groups are related by the $L^2$ analogues of the classical Gauss-Codazzi equations. 
The derivation is routine as in the finite dimensional case. 
Specifically, 
for any right invariant vector fields $V$ and $W$ on $\mathscr{D}_\mu^s$ 
the connections $\bar{\nabla}$ and $\nabla = P\bar{\nabla}$ %on $\mathscr{D}_\mu^s$ and $\mathscr{D}^s$ 
are related by the Weyl-Helmholtz projection $P$ 
and their difference is the second fundamental form $\mathrm{II}$ of $\mathscr{D}_\mu^s$, namely 
\begin{equation} \label{eq:GaussF} 
(\bar{\nabla}_VW)_\eta = (\nabla_VW)_\eta + \mathrm{II}_\eta(V,W), 
\qquad 
\eta \in \mathscr{D}_\mu^s 
\end{equation} 
where 
$(\bar{\nabla}_VW)_\eta = (v{\cdot}\nabla w) \circ \eta$ 
with $V=v\circ\eta$, $W=w\circ\eta$ and $v,w \in T_e\mathscr{D}_\mu^s$. 
In particular, we find that 
$\mathrm{II}_e(v, w) = Q_e(v{\cdot}\nabla w)$ 
is just the polarization of the pressure gradient term 
(see \eqref{eq:Q} above). 
Inserting formula \eqref{eq:GaussF} (with $w=u_0$) into the expression for the curvature tensor 
$\bar{\mathcal{R}}_e$ of $\mathscr{D}^s$ and taking the $L^2$ product with $v$ 
we obtain 
\begin{align} \nonumber 
\big\langle \mathcal{R}_e(v,u_0) u_0, v \big\rangle_{L^2} 
&= 
\big\langle \bar{\mathcal{R}}_e(v,u_0)u_0,v \big\rangle_{L^2} 
+ 
\big\langle \mathrm{II}_e(u_0,u_0), \mathrm{II}_e(v,v) \big\rangle_{L^2} 
- 
\big\langle \mathrm{II}_e(u_0, v), \mathrm{II}_e(v,u_0) \big\rangle_{L^2} 
\\ \label{eq:Gauss} 
&= 
\big\langle R^{_M}(v, u_0) u_0, v \big\rangle_{L^2} 
+ 
\big\langle Q_e(u_0{\cdot}\nabla u_0), Q_e(v{\cdot}\nabla v) \big\rangle_{L^2} 
- 
\big\| Q_e(u_0{\cdot}\nabla v) \big\|_{L^2}^2 
\end{align} 
since the second fundamental form $\mathrm{II}$ is symmetric in its entries 
and takes values in the $L^2$ orthogonal complement of $T_e\mathscr{D}_\mu^s$ in $T_e\mathscr{D}^s$. 

The remainder of the argument rests on establishing the following two statements. 
\begin{enumerate} 
\item { \emph{(index estimate for non-conjugacy)}}
Given any $T>0$, if $\gamma(t)$ has no points conjugate to $e$ in the segment $[0, T]$ 
then the index form $\mathcal{E}''(\gamma)$ is nonnegative on any vector field on $\gamma$ 
that vanishes at the two end points. 
\item { \emph{(negativity of index form)}}
There exist $t_c >0$ and a vector field $V(t)$ along $\gamma(t)$ such that 
$V(t)$ vanishes at $t=0$ and $t=t_c$ and $\mathcal{E}''(\gamma)(V,V) < 0$. 
\end{enumerate} 

Regarding (1) recall that $d\, \mathrm{exp}_e(tu_0)$ is a bounded Fredholm operator of index zero 
and so the assumption on conjugate points implies that it is in fact an isomorphism of the Banach spaces 
$T_{tu_0}T_e\mathscr{D}_\mu^s \simeq T_e\mathscr{D}_\mu^s$ and $T_{\gamma(t)}\mathscr{D}_\mu^s$ 
for each $0 \leq t \leq T$. 
Therefore, given any vector field $Z(t)$ vanishing at the end points of the segment $\gamma([0,T])$ 
one can construct a variation family $\gamma_s(t) = \mathrm{exp}_{\gamma(t)}{sZ(t)}$ of $\gamma(t)$ 
for sufficiently small values of the parameter $s$ whose infinitesimal variation field 
is precisely the given vector field $Z(t)$. 
A direct calculation shows that the energy functional satisfies 
$\mathcal{E}(\gamma_s) \geq \mathcal{E}(\gamma)$ 
for all sufficiently small $s$. 
The calculation relies on an $L^2$ version of the Gauss Lemma which follows in turn 
from the first variation formula \eqref{eq:var-1} as in the classical Riemannian case. 
For further details we refer to \cite{mis2}. 

%XXXX 
%
%To prove (1) one constructs a variation family $\gamma(s,t)$ of the geodesic $\gamma(t)$ 
%whose infinitesimal variation field is precisely the given vector field 
%and then proceeds to show that the energy functional satisfies 
%$\mathcal{E}(\gamma(s)) \geq \mathcal{E}(\gamma)$ 
%for all sufficiently small values of the parameter $s$. 
%The argument is standard in classical geometry and here one only needs to resolve 
%the usual technicalities resulting from the infinite dimensional setting and related to the (weak) $L^2$ metric. 

Of more interest to us is the second statement (2). 
Given $v$ in $T_e\mathscr{D}_\mu^s$ satisfying the condition in \eqref{eqn:mc} 
with 
$m_c = m_c^{u_0,v}>0$, 
we set 
$t_c = \pi \sqrt{2/m_c}$ 
and using right translations in $\mathscr{D}_\mu^s$ (given by compositions with diffeomorphisms) 
define a vector field 
$V(t) = f(t) v \circ \gamma(t)$ 
where $f(t)$ is a smooth nonzero function with $f(0)=f(t_c)=0$. 
Differentiating $V(t)$ along the geodesic, 
%we get 
%$\nabla_{\dot{\gamma}} V = f' v{\circ}\gamma + f P_e(u_0{\cdot}\nabla v) \circ \gamma$ 
%and 
integrating by parts and using right invariance of the $L^2$ metric 
we find 
\begin{equation} \label{eq:induced} 
\| \nabla_{\dot{\gamma}} V \|_{L^2}^2 
= 
f'^2 \|v\|_{L^2}^2 + f^2 \| P_e( u_0{\cdot}\nabla v) \|_{L^2}^2.
\end{equation} 
Substituting \eqref{eq:induced} into \eqref{eq:var-2} 
and employing the  Gauss-Codazzi  equation \eqref{eq:Gauss} 
together with the fact that $P_e$ and $Q_e$ are $L^2$ orthogonal projections 
we obtain 
\begin{align*} 
\mathcal{E}''(\gamma)(V,V) 
&= 
\int_0^{t_c} \Big( 
f'^2 \|v\|_{L^2}^2 
+ 
f^2 \|P_e (u_0{\cdot}\nabla v)\|_{L^2}^2 
- 
f^2  \langle \mathcal{R}_e(v, u_0) u_0, v \rangle_{L^2} \Big) dt 
\\ 
&= 
\int_0^{t_c} \Big( 
f'^2 \|v\|_{L^2}^2 
- 
f^2 \big\langle R^{_M}(v, u_0) u_0, v \big\rangle_{L^2} 
- 
f^2 \big\langle u_0{\cdot}\nabla u_0, v{\cdot}\nabla v \big\rangle_{L^2} 
+ 
f^2 \|u_0{\cdot}\nabla v\|_{L^2}^2 
\Big) dt
\end{align*} 
since $u_0$ is a stationary Euler solution, so that $P_e (u_0 {\cdot} \nabla u_0) = 0$.  
Next, expressing the curvature term with the help of the covariant derivative of the Riemannian metric on $M$ we have
\be
\big\langle R^{_M}(v, u_0) u_0, v \big\rangle_{L^2} =\big\langle \nabla_{v}^{_M}  \nabla_{u_0}^{_M} u_0 -  \nabla_{u_0}^{_M}  \nabla_{v}^{_M}  u_0-  \nabla_{[u_0, v]}^{_M} u_0, v \big\rangle_{L^2}
\ee
where, for simplicity, we have used the alternative notation $\nabla_{u}^{_M} v = u {\cdot}\nabla v$.
Finally, recalling $\|v\|_{L^2}=1$ and manipulating the terms on the right hand side we find
\begin{align*} 
\mathcal{E}''(\gamma)(V,V) = 
\int_0^{t_c} 
\Big( 
f'^2 \|v\|_{L^2}^2 
- 
f^2 \big\langle [u_0,v] {\cdot} \nabla u_0 + u_0 {\cdot} \nabla [u_0,v], v \big\rangle_{L^2} 
\Big) dt 
&= 
\int_0^{t_c} \big( f'^2 - m_c f^2 \big) dt.
\end{align*} 
The proof is completed by taking 
$f(t) = \sin{t\sqrt{m_c/2}}$. 
For more details we refer once again to \cite{mis2}; cf. Lemmas 2 and 3. 
\end{proof} 
% 

%%%%%%%%%%%%%%%%%%%%%% 
%%%%%%%%%%%%%%%%%%%%%%%%%%%%%%%%%%% 
\section{Existence of conjugate points along Kolmogorov flows} 

In this section, we establish one of our main results which concerns the plethora of 
conjugate points in $\mathscr{D}_\mu^s(\mathbb{T}^2_\alpha)$ 
where $\mathbb{T}^2_\alpha =[0, 2\pi/\alpha)\times [0, 2\pi)$ 
for $\alpha\in  \mathbb{R}^+$ is the flat, rectangular torus.  
Our main theorem will follow as an application of the machinery of \S \ref{sec:conj_points}.  
As discussed in the Introduction, we focus here on finding conjugate points within the family of Kolmogorov flows 
- a problem for investigation proposed by Arnold (Problem 1970-6; \cite{arn-P}). 
Kolmogorov flows are the eigenfunctions of the Stokes operator $A= - P_e \Delta$ 
on $\mathbb{T}^2_\alpha$: 
for some $\lambda>0$ a non-trivial $v_\lambda$ must satisfy
\be \label{seig} 
A v_\lambda = \lambda v_\lambda. 
\ee 
Note that $v_\lambda$ is an eigenfunction of \eqref{seig} with eigenvalue $\lambda$ 
if and only if  $\lambda$ is an eigenvalue of the (negative) Laplacian with a (mean-zero) eigenfunction $q_\lambda$.  Indeed, for an eigenfunction of the Laplacian it is easy to see that the velocity field $v_\lambda= \nabla^\perp q_\lambda$ where $\nabla^\perp = (-\partial_2, \partial_1)$ is  eigenfunction of the Stokes operator.
On the other hand, if $v_\lambda$ is an eigenfunction of $A$, then since it is divergence-free we may write
$v_\lambda = \nabla^\perp q_\lambda$ with $ \int_{\mathbb{T}_\alpha^2} q_\lambda =0$.  Then, since $v_\lambda$ satisfies \eqref{seig} together with the fact that $[\Delta ,P_e]=0$ we obtain $\nabla^\perp (\Delta  q_\lambda + \lambda  q_\lambda)=0$.  Taking the curl, we find $\Delta(\Delta  q_\lambda + \lambda  q_\lambda)=0$ which implies $ -\Delta  q_\lambda = \lambda  q_\lambda$ since $q_\lambda$ is mean zero.
Consequently, the spectrum is 
\be 
\sigma(A) 
= 
\Big\{ (\alpha n)^2 + m^2 :\ n, m\in \mathbb{N}, \ \  (n,m)\neq (0,0)  \Big\}. 
\ee 
There are results about non-existence of conjugate points in this family.
For example, the unidirectional Kolmogorov flows, i.e.  $k_1,k_2\in \mathbb{Z}$
 \be
 \cos(k_1 \alpha x+ k_2 y) \quad \text{or} \quad  \sin(k_1 \alpha x+ k_2 y) 
 \ee
 do not possess conjugate points. 
 These flows are characterized within the Kolmogorov class by having straight streamlines 
 (in contrast with a typical member of the family which has cellular structure, see Figure \ref{fig:stream}a). 
 In this case, Arnold \cite{arn} proved that the sectional curvature was non-positive, 
 see also Lukatski \cite{luk}. 
 The result when either $k_1$ or $k_2$ is trivial follows from prior work on plane parallel flows \cite{mis1}.

In the following theorem, we compute the quantity $m_c$ (cf. equation \eqref{eqn:mc}) 
in a particular direction for a large class of Kolmogorov flows corresponding to eigenfunctions 
$q_\lambda =- \cos(n \alpha  x) \cos(m y)$. 
We find it to be positive for infinitely many $n,m$ implying that sectional curvatures 
in this direction are positive and strong enough to ensure existence of conjugate points 
by Theorem \ref{conjpointthm}. 
We have
\begin{thm}[Conjugate points in $\mathscr{D}_\mu^s(\mathbb{T}^2_\alpha)$] \label{kflow1}
Each Kolmogorov flow defined by the stream function 
\be\label{steadystate}
\psi (x,y) =- \cos(n \alpha  x) \cos(m y)
\ee
for any $n,m\in \mathbb{Z}$ such that
\be\label{const} 
\text{either} \quad 
3 + 11 m^2 + 6 m^4 + (3 - 2 m^2) n^2 \alpha^2 \leq 0  \quad \text{or} \quad 3 + 11 n^2 + 6 n^4 + (3 - 2 n^2) m^2 \alpha^{-2}  \leq 0
\ee
possesses a conjugate point. 
\end{thm}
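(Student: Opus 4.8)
The plan is to apply Theorem~\ref{conjpointthm}. Write $u_0=\nabla^\perp\psi$ with $\psi=-\cos(n\alpha x)\cos(my)$ and set $\lambda=n^2\alpha^2+m^2$. Since $-\Delta\psi=\lambda\psi$, the vorticity $\omega=-\Delta\psi=\lambda\psi$ is a function of $\psi$, hence $\{\psi,\omega\}=0$ and $u_0$ is a stationary solution of \eqref{eq:Euler}; note also that \eqref{const} forces $n,m\neq0$, so $u_0$ is genuinely cellular rather than a unidirectional shear. It therefore suffices to exhibit a unit vector $v\in T_e\mathscr{D}_\mu^s(\mathbb{T}^2_\alpha)$ with $m_c^{u_0,v}>0$: by Theorem~\ref{conjpointthm} this produces a conjugate point along $t\mapsto\exp_e(tu_0)$, occurring no later than $t=\pi\sqrt{2/m_c^{u_0,v}}$.

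For the first inequality in \eqref{const} I would take a test direction $v=c\,\nabla^\perp\phi$ with $\phi$ a trigonometric stream function adapted to $\psi$ and $c>0$ fixed by $\|v\|_{L^2}=1$ (so $c^{-2}$ is a fixed multiple of $\lambda\,|\mathbb{T}^2_\alpha|$); the natural first candidate is $\phi=\cos(n\alpha x)\sin(my)$, and the second inequality is handled by the mirror choice $\phi=\sin(n\alpha x)\cos(my)$, the two cases being exchanged by the isometry of $\mathbb{T}^2_\alpha$ that swaps the two circle factors (equivalently $\alpha\mapsto1/\alpha$, $n\leftrightarrow m$). The point of a stream-function test direction is that in two dimensions $[\nabla^\perp f,\nabla^\perp g]=\nabla^\perp\{f,g\}$ with $\{f,g\}=f_xg_y-f_yg_x$, and for such $\phi$ the Poisson bracket $\{\psi,\phi\}$ collapses to a single mode, so that $w:=[u_0,v]$ is an explicit shear flow.

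The computation of $m_c^{u_0,v}$ then goes as follows. Starting from \eqref{eqn:mc}, integrate by parts in both inner products (legitimate because $u_0$, $v$ and $w$ are all divergence-free and $\mathbb{T}^2_\alpha$ has no boundary) and use $u_0\cdot\nabla v=w+v\cdot\nabla u_0$; this collapses $m_c^{u_0,v}$ to the compact form $\langle[v,[u_0,v]],u_0\rangle_{L^2}-\|[u_0,v]\|_{L^2}^2$ (equivalently, one may begin from the identity in the Remark following Theorem~\ref{conjpointthm} together with Arnold's Fourier-mode formula for the $L^2$ sectional curvature of $\mathscr{D}_\mu^s(\mathbb{T}^2_\alpha)$). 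Writing $w=\nabla^\perp h$, one has $[v,w]=c\,\nabla^\perp\{\phi,h\}$, which is again a short trigonometric sum, and using $\langle\nabla^\perp F,\nabla^\perp G\rangle_{L^2}=-\langle F,\Delta G\rangle_{L^2}$ and $\Delta\psi=-\lambda\psi$ reduces $\langle[v,w],u_0\rangle_{L^2}$ to a multiple of $\langle\{\phi,h\},\psi\rangle_{L^2}$; both this quantity and $\|w\|_{L^2}^2$ then reduce to products of the elementary integrals $\int_0^{2\pi/\alpha}\cos^2(jn\alpha x)\,dx$ and $\int_0^{2\pi}\cos^2(jmy)\,dy$ and orthogonality among distinct modes. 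Evaluating these produces $m_c^{u_0,v}$ as a rational function of $n^2\alpha^2$ and $m^2$ with positive denominator, whose numerator is a negative multiple of $3+11m^2+6m^4+(3-2m^2)n^2\alpha^2$; hence $m_c^{u_0,v}>0$ exactly when the first inequality in \eqref{const} holds. Theorem~\ref{conjpointthm} then yields the conjugate point, and the mirror argument gives the second case.

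I expect the main obstacle to lie not in the execution but in the choice of test direction together with the sign and normalization bookkeeping. The reduction above, carried out with $\phi=\cos(n\alpha x)\sin(my)$, already shows that $m_c^{u_0,v}$ has the sign of $m^2-3n^2\alpha^2$, which proves the theorem in a different parameter regime; pinning down the precise condition \eqref{const} requires selecting (and optimizing over) a slightly richer test direction and then tracking carefully the normalization constant, the signs entering the Poisson-bracket calculus, and the cancellation of the pressure term that uses $u_0$ being stationary. This is precisely the ``prior insight into the geometry of the neighbourhood'' that the discussion preceding Theorem~\ref{conjpointthm} flags as necessary, and it is the one step of the argument that is not mechanical.
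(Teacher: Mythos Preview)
Your framework is exactly right: apply Theorem~\ref{conjpointthm} to $u_0=\nabla^\perp\psi$, choose a stream-function test direction $v=\nabla^\perp\phi$, and compute $m_c^{u_0,v}$. The gap is precisely the one you identify yourself: the choice of $\phi$. Your candidate $\phi=\cos(n\alpha x)\sin(my)$ is only a phase shift of $\psi$ and, as you note, produces the condition $m^2>3n^2\alpha^2$ rather than \eqref{const}. Since the entire content of the theorem is the specific range \eqref{const}, the proof as written does not establish the stated result; the hard step is not bookkeeping but finding the right $\phi$, and that step is left open.

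The paper's test direction is the \emph{sheared} stream function
\[
\phi(x,y) = -2\cos(n\alpha x + y)\cos(my),
\]
i.e.\ the original $\psi$ with the argument $n\alpha x$ replaced by $n\alpha x+y$. This is not a phase shift within the same eigenspace but a genuine tilt of the cellular pattern (so $\phi$ involves the new wavenumbers $(n\alpha,\,m\pm1)$), and it is exactly this extra mode that generates the denominator factor $1+m^2+n^2\alpha^2$ and the numerator $3+11m^2+6m^4+(3-2m^2)n^2\alpha^2$ you are after. The authors motivate the choice physically: shearing a coherent structure lowers its kinetic energy, so perturbing toward a sheared copy of $\psi$ pushes one onto a nearby geodesic of lesser action, which is precisely the mechanism that forces a conjugate point via the index form. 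For the second inequality in \eqref{const} the shear goes the other way, $\phi(x,y)=-2\cos(n\alpha x)\cos(my+\alpha x)$, and the computation is symmetric under $(n\alpha,m)\leftrightarrow(m/\alpha,n)$ as you anticipated. Once this $\phi$ is in hand, your outlined reduction (Poisson-bracket calculus, orthogonality of Fourier modes) goes through mechanically and yields the explicit rational expression for $m_c^{u_0,v}$.
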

\begin{remark}
As $\alpha\to \infty$ or $\alpha\to 0$, the condition becomes $n\neq 0$ and $|m|\geq  2$ 
and $m\neq 0$ and $|n|\geq  2$ respectively. 
Therefore the range where conjugate points can be found extends to (nearly) 
the entire admissible range of $(n,m)$ with the notable exception of points 
$(n,m)=(n,\pm 1)$ and $(n,m)=(\pm 1, m)$ respectively. 
The ranges in \eqref{const} is visualized in Figure \ref{fig:range} in two cases. 
\begin{figure}[h!]
   \centering
   \begin{subfigure}[b]{0.3\textwidth}
       \includegraphics[width=\textwidth]{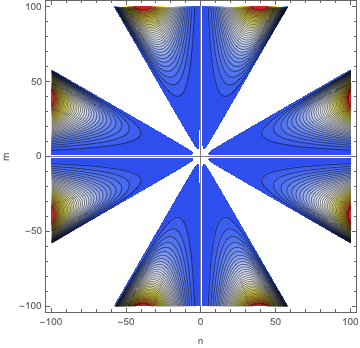}
   \end{subfigure}\hspace{5mm}% or \hspace{5mm} or \hspace{0.3\textwidth}
   \begin{subfigure}[b]{0.3\textwidth}
       \includegraphics[width=\textwidth]{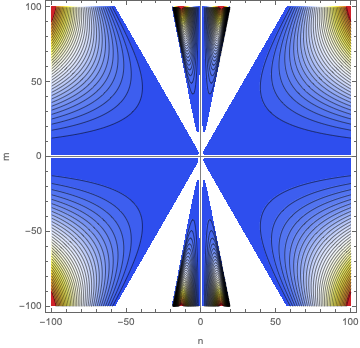}
    \end{subfigure}
  \caption{Contour plots of $m_c^{u_0,v}(\alpha)$ restricted to positive values (smaller values in \textcolor{blue}{blue} and larger in \textcolor{red}{red}). On left:  filled in regions represent those for 
 which $(n,m)\in \mathbb{Z}^2$  satisfy the constraints \eqref{const}. On right: filled regions  contain  $(n,m)\in \mathbb{Z}^2$ satisfing \eqref{const} with $\alpha=3$.}
     \label{fig:range}
\end{figure} 
\end{remark}

\begin{proof} 
Theorem \ref{kflow1} follows from the general method described in \S \ref{sec:conj_points}. 
In particular, with the steady state $u_0=\nabla^\perp \psi$ where $\psi$ is given by \eqref{steadystate} 
with $\alpha\in \mathbb{R}^+$, we define the test stream function 
\be \label{teststream}
\phi(x,y) = -2 \cos(n \alpha x +  y ) \cos(m y), 
\ee 
and $v=\nabla^\perp \phi$. 
See Fig. \ref{fig:stream} for a visualization of the streamfunction plot of the steady state and the test field. 
\begin{remark}
Some heuristic discussion of the choice of the test field \eqref{teststream} might be helpful. 
Roughly, to identify a point on a geodesic as a conjugate point one looks for a shorter path connecting it to the identity. 
Now, the test field chosen above is nothing but a shearing of the original steady state \eqref{steadystate}. 
Shearing of a coherent structure is a mechanism that reduces its energy, see \cite{VY}.  
Thus, this test field slightly perturbs the steady state towards a geodesic with lesser ``action" (energy) 
while at the same time maintaining enough structure of the steady flow without destabilizing it. 
There is some action required to perturb to this state but over time this nearby geodesic of lesser action 
will ``catch up" to the original until a resonance, i.e., our conjugate point, occurs.
\end{remark} 
With this choice, a lengthy but straightforward calculation yields 
\be\label{recm} 
m_c^{u_0,v}(\alpha) 
= 
- \left(\frac{n^2\alpha^2}{8\pi^2}\right) 
\frac{ 3 + 11 m^2 + 6 m^4 + (3 - 2 m^2) n^2 \alpha^2}{ (m^2 + n^2 \alpha^2) (1 + m^2 + n^2 \alpha^2)  }. 
\ee 
The first condition \eqref{const} follows directly from the requirement that $m_c^{u_0,v}(\alpha) >0$. 
The second condition in \eqref{const} comes from using the test stream function 
\be\label{teststream2} 
\phi(x,y) = -2 \cos(n \alpha x ) \cos(m y+ \alpha x), 
\ee 
and $v=\nabla^\perp \phi$, which yields 
\be\label{recm} 
m_c^{u_0,v}(\alpha) 
= 
- \left(\frac{n^2\alpha^4}{8\pi^2}\right)
\frac{  3 + 11 n^2 + 6 n^4 + (3 - 2 n^2) m^2 \alpha^{-2} }{ (m^2 + n^2 \alpha^2) (1 + m^2 + n^2 \alpha^2)  }. 
\ee 
Applying Theorem \ref{conjpointthm} we deduce the existence of conjugate points. 
\begin{figure}
   \centering
   \begin{subfigure}[b]{0.25\textwidth}
       \includegraphics[width=\textwidth]{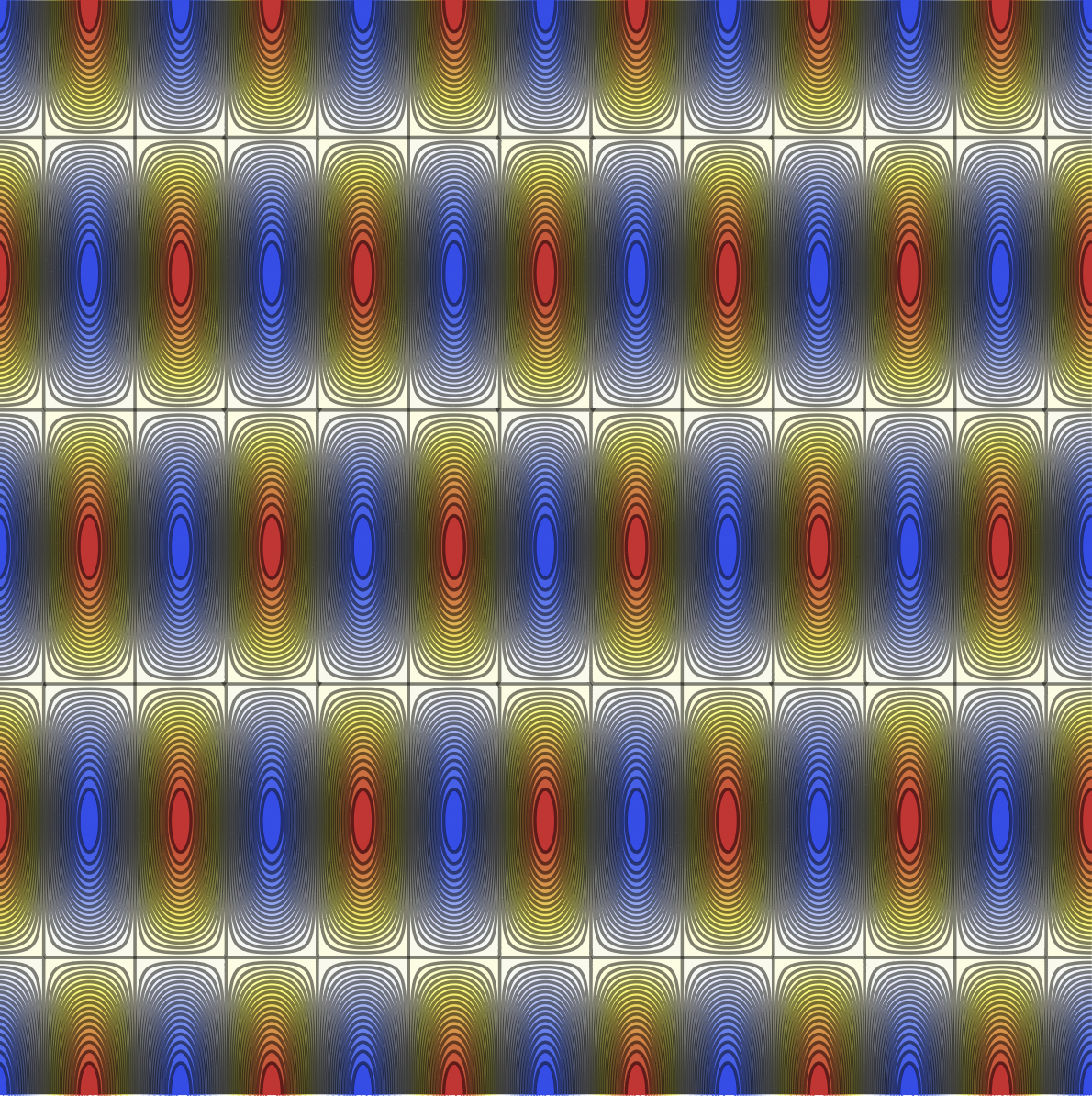}
   \end{subfigure}\hspace{12mm}% or \hspace{5mm} or \hspace{0.3\textwidth}
   \begin{subfigure}[b]{0.25\textwidth}
       \includegraphics[width=\textwidth]{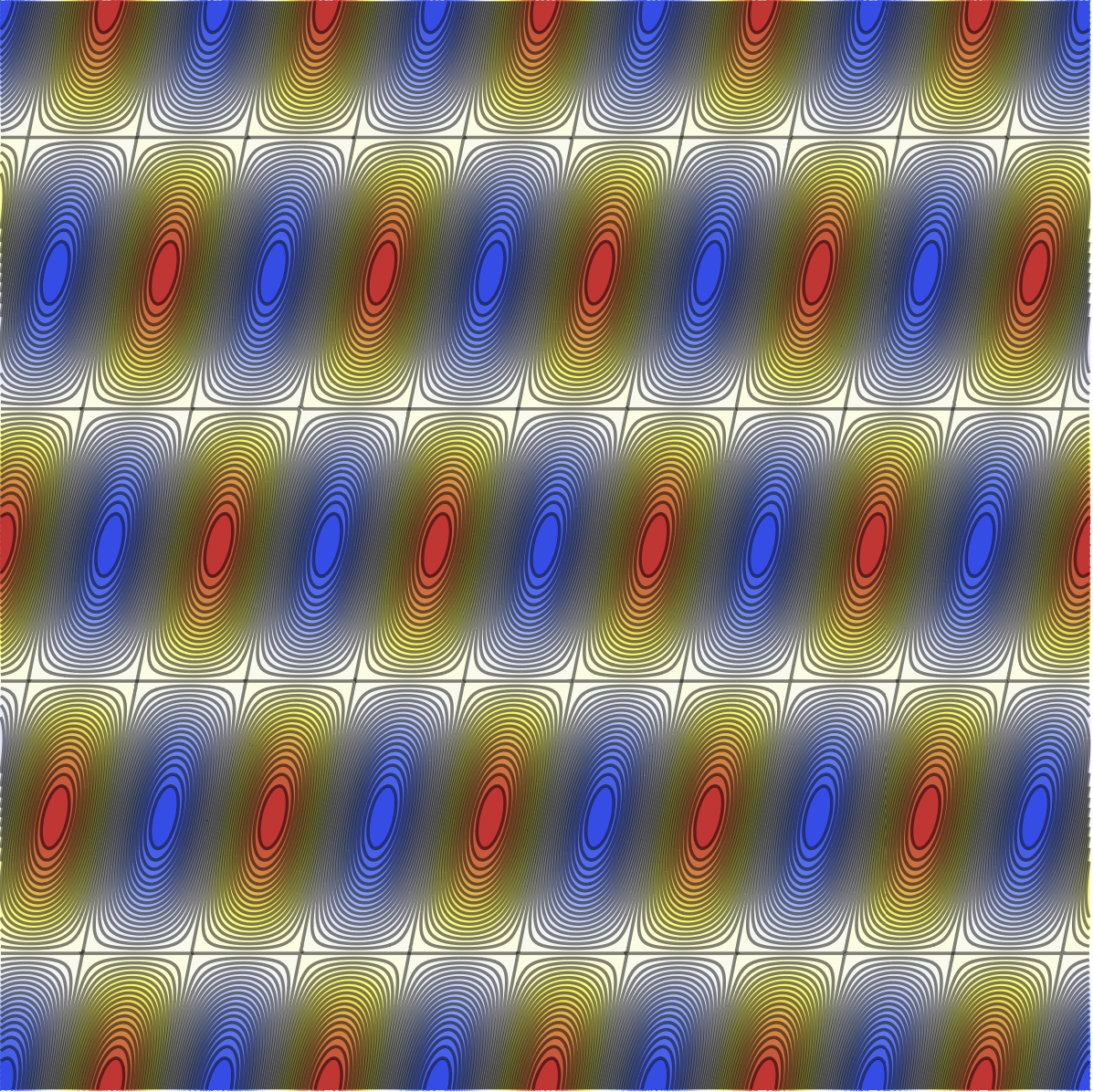}
    \end{subfigure}
    \caption{Contour plot of streamfunction $\psi$ of \cite{mis2} given by \eqref{steadystate}  with $n=6$ and $m=2$ is on the left. On the right, the corresponding test streamfunction $\phi$ given by \eqref{teststream} is shown.}   \label{fig:stream}
\end{figure} 
\end{proof} 

Theorem \ref{kflow1} provides an infinite family of stationary Euler states on the  flat two-torus with arbitrary aspect ratio which possess conjugate points. 
Recall also from the above discussion that Kolmogorov flows having very special unidirectional structure possess no conjugate points.
 We suspect that these flows are exceptional and that all other Kolmogorov flows possess conjugate points.
\begin{quest}\label{kquest}
Do all Kolmogorov flows on $\mathbb{T}_\alpha^2$ which are not unidirectional possess conjugate points?
\end{quest}
We add that it may be of interest to investigate Kolmogorov flows on more general domains 
(for example, the disk), which we also expect to generally possess conjugate points. In fact, there is interesting numerical work when the domain is the unit square which suggests the existence of cut points along the first eigenfunction of the Stokes operator  \cite{lan12}.

%%%%%%%%%%%%%%%%%%%%%%%%%%%%% 
%%%%%%%%%%%%%%%%%%%%%%%%%%%%%%%%%%% 
\section{Non-existence of conjugate points along Arnold stable flows} 

In this section we describe a class of steady states called Arnold stable flows. 
Recall that any stationary solution of the Euler equations is a critical point of the kinetic energy. 
The second variation of $E$ on tangent spaces 
to the coadjoint orbit $\mathcal{O}_{u_0}$ consisting of vector fields $w$ that are isovortical to $u_0$ 
was computed by Arnold \cite{arn} to obtain a criterion for Lyapunov stability of steady fluid flows 
in the mixed ``energy-enstrophy" norm" 
$\|v\|_{L^2} + \|\mathrm{curl}\, v\|_{L^2}$ 
under sufficiently small perturbations of the velocity field. 
Specifically, it is found that such flows can be constructed from solutions of the problem
\begin{align} \label{Euler1} 
\Delta \psi &= F(\psi) \qquad \text{in} \ M, 
\\ 
\psi &= {\rm (const.)}  \ \ \ \text{on} \ \partial M, \label{Euler2} 
\end{align} 
when $F$ enjoys certain properties described below 
and where $\Delta$ is the Laplace-Beltrami operator of the Riemannian metric $g$ on $M$.  
First note that the velocity is $u= \nabla^\perp \psi$ is automatically a solution of the Euler equation on $M$ 
with vorticity $\omega =F(\psi)$. 
The properties of $F$ which ensure nonlinear stability are either 
\begin{equation} 
-\lambda_1<F'(\psi) <0 \qquad \text{or} \quad 0< F'(\psi)<\infty 
\label{arnoldscond} 
\end{equation} 
where $\lambda_1>0$ is the smallest eigenvalue of $-\Delta$ in $M$, 
see \cite{arn} or \cite{arkh}; Thm. 4.3. 
Here we provide a simple argument that this stability forbids conjugate points in some settings. 
\begin{thm} \label{arnoldthm} 
Let $M\subset \mathbb{R}^2$ be the periodic channel, the annulus or the disk with the Euclidean metric. 
Then no Arnold stable flow on $M$ has a conjugate point. 
\end{thm}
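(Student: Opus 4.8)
The plan is to deduce Theorem~\ref{arnoldthm} from two ingredients: the rigidity classification of Arnold stable steady states on these three flat domains due to \cite{cdg}, and the known absence of conjugate points along the highly symmetric flows that this classification singles out. In outline, I would first show that every Arnold stable $u_0=\nabla^\perp\psi$ on $M$ is a parallel shear flow when $M$ is the channel and a circular flow (one whose streamlines are the concentric circles) when $M$ is the disk or the annulus; then I would invoke the fact that the geodesic $\gamma(t)=\exp_e(tu_0)$ generated by such a flow carries no point conjugate to $e$. By item (i) of Section~\ref{sec:KE}, the latter is equivalent to nonnegativity of the index form \eqref{eq:var-2} on every vector field along $\gamma$ vanishing at the endpoints of an arbitrary finite segment.

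For the first step, recall that an Arnold stable flow solves $\Delta\psi=F(\psi)$ with $\psi$ constant on each boundary component and $F'$ in one of the two ranges \eqref{arnoldscond}. When $0<F'<\infty$, the nonlinearity is increasing and a Gidas--Ni--Nirenberg-type reflection argument, compatible with the Euclidean symmetry of each domain and carried out in \cite{cdg}, forces $\psi$ to inherit that symmetry; the boundary conditions also prevent $\psi$ from having interior critical points, so $u_0$ has no interior stagnation points. When $-\lambda_1<F'<0$, one appeals instead to the companion rigidity statement of \cite{cdg}, for which the hypothesis $F'>-\lambda_1$ is exactly the required spectral condition, and which again yields a parallel shear flow on the channel and a circular flow on the disk and annulus. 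Part of the work here is to quote \cite{cdg} in a form covering both branches of \eqref{arnoldscond} on all three domains, together with the flux and circulation data in the multiply connected case.

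For the second step the two geometries are handled differently, the common point being that all three domains are flat, so the pointwise term $R^{_M}$ in the Gauss--Codazzi identity \eqref{eq:Gauss} from the proof of Theorem~\ref{conjpointthm} vanishes. For a parallel shear flow the pressure is constant, so $u_0{\cdot}\nabla u_0=0$ and $Q_e(u_0{\cdot}\nabla u_0)=0$; then \eqref{eq:Gauss} reduces to $\langle\mathcal{R}_e(w,u_0)u_0,w\rangle_{L^2}=-\|Q_e(u_0{\cdot}\nabla w)\|_{L^2}^2\le 0$ for every $w\in T_e\mathscr{D}_\mu^s$, and by right-invariance the $L^2$ sectional curvature is nonpositive on every $2$-plane along the whole geodesic; hence the index form is nonnegative and no conjugate point occurs (this recovers the plane-parallel non-conjugacy result of \cite{mis1}). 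For a circular flow on the disk or annulus the pressure is not constant --- there is a centripetal gradient --- so this shortcut fails; instead I would use that $\gamma$ is the explicit differential rotation $\eta_t(r,\theta)=(r,\theta+t\Omega(r))$ for the appropriate angular-velocity profile $\Omega$, and analyze the Jacobi equation along it, decomposing over the angular Fourier modes and checking that no mode produces a conjugate point --- which for the rigid rotation $\Omega\equiv\mathrm{const}$ is \cite{pre2}, and for a general admissible profile proceeds by the same scheme. In either case the Index Theorem closes the argument.

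The hard part is the disk/annulus half of the second step. There the centripetal pressure gradient contributes to \eqref{eq:Gauss} the term $\langle Q_e(u_0{\cdot}\nabla u_0),Q_e(w{\cdot}\nabla w)\rangle_{L^2}$, which has indefinite sign, so the $L^2$ curvature along a circular flow need not be nonpositive and a Rauch-type comparison is unavailable; one must control the Jacobi (index) operator directly, uniformly over all angular modes, for an arbitrary profile $\Omega$, and with care near the coordinate singularity at the center of the disk. By comparison, once the rigidity input from \cite{cdg} is in hand the channel case and the Fredholm/Index bookkeeping are routine.
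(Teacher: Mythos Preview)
Your overall architecture---first invoke the rigidity of Arnold stable states from \cite{cdg} to reduce to shear flows on the channel and circular flows on the disk/annulus, then show these special flows have no conjugate points---is exactly the paper's strategy, and your treatment of the channel case via the Gauss--Codazzi identity and \cite{mis1} matches the paper's.

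Where you diverge is the disk/annulus half, and there you are making it harder than it is. You assert that because the centripetal term $\langle Q_e(u_0{\cdot}\nabla u_0),Q_e(w{\cdot}\nabla w)\rangle_{L^2}$ has indefinite sign, the $L^2$ sectional curvature along a circular flow ``need not be nonpositive,'' and therefore propose a mode-by-mode Jacobi analysis that you leave as ``proceeds by the same scheme.'' But in fact Preston has already shown (Proposition~6.1 of \cite{pre1}) that on a flat two-dimensional domain every radial flow $u=v(r)e_\theta$ defines a nonpositive $L^2$ curvature operator along its geodesic; the indefinite middle term is always dominated by $-\|Q_e(u_0{\cdot}\nabla w)\|_{L^2}^2$. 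The paper simply quotes this as a lemma, so the disk/annulus case is no harder than the channel. Your direct Jacobi approach could presumably be pushed through, but it is unnecessary and you have not actually carried it out---so as written the proposal has a gap at precisely the point you flag as ``the hard part,'' which a one-line citation to \cite{pre1} closes.

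A minor remark on the first step: the paper packages the \cite{cdg} input as a single statement (any Arnold stable state inherits the symmetry of a boundary-tangent Killing field), rather than splitting into the two branches of \eqref{arnoldscond} and invoking moving-planes separately. Either formulation works, but the Killing-field version is cleaner and covers both branches at once.
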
 
\begin{proof} 
The proof combines some known results. 
We first recall the following rigidity result: 
\begin{lemma}[Prop 1.1 of \cite{cdg}]\label{propA}
Let $(M,g)$ be a compact two-dimensional Riemannian manifold with smooth boundary $\partial M$.
Suppose that there exists a Killing field $K$ for $g$ which is tangent to $\partial M$. 
Let  $u= \nabla^\perp \psi \in C^2(M)$ be an Arnol'd stable state. 
Then $L_K \psi=0$ where $L_K$ is the Lie derivative along $K$. 
\end{lemma}
Lemma \ref{propA} immediately implies that on the channel with $K=e_{x_1}$ 
all Arnold stable states are shears $u= v(x_2) e_{x_1}$ and on the disk (or annulus) with $K=e_\theta$ 
all Arnold stable states are radial $u= v(r) e_\theta$. 

With these results in hand, we can now appeal to the following two results
\begin{lemma}[Corollary 6.1 or \cite{mis1}]\label{gcor}  
If $(M,g)$ has nonpositive sectional curvature then there are no conjugate points 
along any pressure constant geodesic in $\mathscr{D}_\mu^s(M)$.
\end{lemma} 
\begin{lemma}[Proposition 6.1 of \cite{pre1}]\label{scor}
If $(M,g)$ is flat then every $u = v(r) e_\theta$ defines a nonpositive $L^2$ curvature operator 
along the corresponding geodesic in $\mathscr{D}^s_\mu(M)$. 
\end{lemma} 
Since  plane-parallel (shear) flows on the channel  are pressure constant and the channel with the Euclidean metric has zero curvature, Lemma \ref{gcor} applies.  Lemma \ref{scor} directly applies to radial flows on the disk or annulus 
with the flat Euclidean metric. Since positive curvature is a necessary condition for conjugate points, these flows must therefore be free of conjugate points.  These two observations conclude the proof.
\end{proof} 

We would like to give some physical intuition behind the lack of conjugate points. 
Consider a steady state which is not isochronal, that is the travel time of particles around streamlines 
is not everywhere constant. Two nearby tracer particles in a `shearing' region will separate 
and a material curve connecting them will be ever elongated with length proportional to the time elapsed. 
On the disk or annulus, this curve would become indefinitely wound over time. 
This stretching represents a strong form of Lagrangian instability, and is an enemy for having conjugate points. 
Intuitively, Arnold stable flows cannot have conjugate points because their velocity (in fact, vorticity) field 
is structurally stable to nearby perturbations under the Euler dynamics so that shearing regions remain.  
As such, the stretching mechanism discussed above should persist. 
Theorem \ref{arnoldthm} establishes this for some simple geometries 
and strongly makes use of the structural rigidity of Arnold flows on those domains. 
We believe that this is a more general property of Arnold stable solutions\footnote{In 3D hydrodynamics 
both cut and conjugate points are plentiful. On the one hand (local) cut points exist 
on any sufficiently long geodesic \cite{shn}. 
On the other hand conjugate points cluster or even densely fill out finite geodesic segments \cite{emp}, \cite{prewash}.
This is in sharp contrast with 2D hydrodynamics and it is tempting to relate this phenomenon 
to the failure of Arnold's stability criterion due to indefiniteness of the quadratic form \cite{SV93}. 
In fact, it can be show that (with $v \in T_e\mathscr{D}_\mu^s(M)$  and  $ \|v\|_{L^2}^2=1$) we have 
\begin{align*} 
m_c^{u_0,v} 
&= 
-2E''_{u_0}(w) 
+ 
\big\langle [\mathrm{ad}_v, \mathrm{ad}_v^\ast] u_0, u_0 \big\rangle_{L^2} 
\end{align*} 
where $w = \mathrm{ad}_v^\ast u_0$,
which may prove useful in investigating this issue.},
 at least on flat manifolds, and ask more generally:
\begin{quest}\label{arnoldquest}
Let $(M,g)$ be a compact two-dimensional Riemannian manifold with smooth boundary $\partial M$.  
Can any Arnold stable solutions on $M$ have conjugate points?
\end{quest} 
% 

%This formula may prove useful for addressing  Question \ref{arnoldquest} in the case of Arnold stable states which are minimizers  $E''_{u_0}(w)>0$ (see, e.g. Burton \cite{bur}), although one must contend with the complicated nature of the commutator $[\mathrm{ad}_v, \mathrm{ad}_v^\ast]$.  

%%%%%%%%%%%%%%%%%%%%%%%% 
%%%%%%%%%%%%%%%%%%%%%%%%%%%% 
\section{Cut points,  isochronal flows, and closed geodesics} 
\label{sec:cut} 

%In this section we describe some simple examples and a general result 
%as an application of the geometric constructions discussed in this paper 
%which may be of some hydrodynamical interest. 

Closed geodesics are those whose initial and final points and tangent vectors coincide. 
In geometric hydrodynamics they correspond to flows which have periodic-in-time Lagrangian flowmap.  
A special class of these which correspond to stationary Euler solutions goes by the name of \emph{isochronal} flows. 
It is easy to see that an isochronal Euler flow on a simply connected domain must possess 
a stream function with a single elliptic stagnation point whose level sets foliate the domain by (topological) circles.  
All orbits of particle trajectories are thus periodic in time and the isochronality condition 
means that the period of rotation $T(c)$ for $c$ in the range of $\psi$ defined by 
\begin{align} 
T(c) &:=\oint_{\{ \psi=c\}} \frac{\rmd \ell}{|\nabla \psi|}, 
\end{align} 
is independent of $c$, the streamline traversed. 

\begin{example} \label{ex:loci1} 
\emph{An example of an isochronal flow is given by rotating the unit sphere $\mathbb{S}^2$ 
about the $z$-axis in $\mathbb{R}^3$. 
As described in Section \ref{sec:conj_points}, 
we can embed this flow in a one-parameter family of rotations $\varrho_s(t)$. 
Differentiating with respect to the parameter $s$ we obtain a Jacobi field 
along the geodesic in $\mathscr{D}_\mu^s(\mathbb{S}^2)$ corresponding to the original rotation 
which vanishes at $t=0$ and $t=2\pi$, see \cite{mis1}. 
Thus, we find an example of an isochronal flow where the first conjugate point 
coincides with the cut point. 
However, as in finite dimensions, we expect such situations to be rather exceptional.}
\end{example} 
\begin{example} \label{ex:loci2} 
\emph{
Another interesting explicit set of examples corresponds to elliptical vortices with constant vorticity, see \cite{MSY}.
Consider an elliptical domain with major axis $a>0$ and minor axis $b>0$. 
Let the stream function of the velocity be given by $\psi(x,y) = \frac{1}{2} (({x}/{a})^2+ ({y}/{b})^2)$. 
This defines a non-penetrating Euler solution with $u= \nabla^\perp \psi$ 
of constant vorticity 
$\omega = \Delta \psi=  {(a^2 + b^2)}/{a^2b^2}$.  
A short computation shows that the travel time $T(c) =2\pi ab$ is independent of $c$. 
Observe that any isochronal stationary flow on a simply connected planar domain possesses a cut point: 
two minimizing $L^2$ geodesics connecting two states are obtained simply by rotating left or right 
(switching overall direction of the velocity).}

\emph{As a special case of more general results of Preston \cite{pre1} (Proposition 6.1), 
the area-preserving diffeomorphism group at the radial member (occupying the disk or annulus) of the family of elliptical vortices above 
has non-positive curvature.\footnote{In fact, the curvature is identically zero.  This follows from the fact that the solid body rotation on the disk has velocity which is an isometry of the Euclidean space, so that the sectional curvatures must be non-negative \cite{mis1}. Combining these two facts gives the claim.} 
This precludes the existence of conjugate points thus yielding an example of an isochronal flow 
possessing a cut point while being free of conjugate points.}
\end{example} 

Interestingly, as soon as a simply connected planar domain is not radially symmetric, 
Theorem 3.3 (see also Proposition 3.1) of  \cite{pre1} implies that the sectional curvatures 
along any stationary state with isolated stagnation points cannot be all non-positive. 
In particular, given a stationary state $u=\nabla^\perp \psi$ on such a simply connected planar domain $M$, 
the sectional curvature is found to be positive in the direction $w = \psi  u$. 
An explicit computation gives 
\be\label{poscurv}
\big\langle \mathcal{R}_e(u,w) u, w \big\rangle_{L^2} 
= 
\big\| P_e (\psi \, u{\cdot} \nabla u) \big\|_{L^2}^2.
\ee
% 
%Proposition 3.1 of  \cite{pre1} proves that \eqref{poscurv} 
which 
vanishes if and only if $u$ satisfies $u{\cdot} \nabla |u|^2$, making $|u|$ constant on streamlines 
(in particular, along the boundary). It then follows that the domain is a disk or annulus. 
See also Theorem 1.9 of Hamel and Nadirashvili \cite{hn19}. 

In light of our focus, it is natural to ask whether conjugate points can be found in these directions. 
A simple calculation for $u =\nabla^\perp \psi$ and $w = \psi u$ shows that
\begin{align} 
m_c^{u,w} 
&= 
\| P_e (\psi \, u{\cdot}\nabla u)\|_{L^2}^2 
-  \| P_e (u{\cdot}\nabla ( \psi u))\|_{L^2}^2 
= 0 
\end{align} 
since $u{\cdot} \nabla \psi=0$.
Thus, we cannot apply Theorem \ref{conjpointthm} to conclude that $w = \psi u$ is a conjugate vector.  
We also cannot conclude from the above that there do not exist conjugate points in this direction. 
We therefore ask 
\begin{quest} 
Are vortices with constant strengths on elliptical domains free of conjugate points? 
\end{quest} 
If the answer is in the affirmative then we will have identified a family of steady states 
that have cut points that are not conjugate points and which \emph{do not} correspond to isometries of 
the underlying manifold $M$. 
One may wonder more generally if all isochronal flows on two-dimensional compact manifolds with boundary 
are free of conjugate points in $\mathscr{D}_\mu^s$. 

The example of a rigid rotation of $\mathbb{S}^2$ in Example \ref{ex:loci1} shows that the conjugate and the cut loci 
of the identity $e$ in $\mathscr{D}_\mu^s$ intersect\footnote{There are examples of compact 
Riemannian manifolds whose conjugate and cut loci are disjoint, cf. \cite{wei}.} 
while the example of the unit disk in Example \ref{ex:loci2} shows that the two geometric notions 
are essentially different and may therefore be describing different, though related, phenomena in fluid motion. 
This point is further amplified in the statement of the next theorem 
whose proof is a modification of an old argument of Klingenberg \cite{kling}.

\begin{thm} \label{thm:cut} 
Let $\eta \in \mathcal{U}$ be a fluid configuration  which lies on the cut locus of $e$ whose $L^2$ distance to $e$ is the smallest  among all such points for which there are (at least) two distinct vectors of minimal $L^2$ norm 
in the fibre $\mathrm{exp}_e^{-1}(\eta)$. 
If $\eta$ is not monoconjugate to $e$ along any $L^2$ geodesic then it is a midpoint of 
a closed geodesic based at $e$. 
\end{thm}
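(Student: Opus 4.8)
The plan is to adapt Klingenberg's classical argument---the closest cut point is a conjugate point or the midpoint of a closed geodesic---to $\mathscr{D}_\mu^s$, substituting the index-zero Fredholm property of $\exp_e$ and the non-conjugacy hypothesis for the local-compactness steps of the Riemannian proof. Write $d=\mathrm{dist}_{L^2}(e,\eta)$. By hypothesis there are two distinct minimizing $L^2$ geodesics $\sigma_1,\sigma_2\colon[0,1]\to\mathscr{D}_\mu^s$ from $e$ to $\eta$, with $\|\dot\sigma_i\|_{L^2}\equiv d$; set $v_i=\dot\sigma_i(0)\in T_e\mathscr{D}_\mu^s$ and $w_i=\dot\sigma_i(1)\in T_\eta\mathscr{D}_\mu^s$. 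A broken minimizing curve is excluded by the first variation formula \eqref{eq:var-1}, so $\eta$ is the cut point along each $\sigma_i$, and uniqueness for the geodesic ODE forces $v_1\neq v_2$, $w_1\neq w_2$. The heart of the matter is to show $w_1=-w_2$: granting this, the concatenation of $\sigma_1$ with the time-reversal of $\sigma_2$ is $C^\infty$ at $\eta$, hence a geodesic loop based at $e$ with midpoint $\eta$; and since conjugacy along a geodesic is symmetric under time reversal, $e$ is not conjugate to $\eta$ along the reversed geodesics either, so a second application of the argument at $\eta$ yields $v_1=-v_2$, upgrading the loop to a genuine closed geodesic.

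To prove $w_1=-w_2$, suppose not. Since $\|w_1\|_{L^2}=\|w_2\|_{L^2}=d$ and $w_1\neq-w_2$, the unit vector $\mathcal W:=-(w_1+w_2)/\|w_1+w_2\|_{L^2}\in T_\eta\mathscr{D}_\mu^s$ is well defined and is a common obtuse direction: $\langle\mathcal W,w_1\rangle_{L^2}=\langle\mathcal W,w_2\rangle_{L^2}=-\delta$, $\delta:=\tfrac12\|w_1+w_2\|_{L^2}>0$ (it is automatically orthogonal to $w_1-w_2$). As $\eta$ is not monoconjugate along $\sigma_1$ or $\sigma_2$, the Index theorem (i) of \S\ref{sec:KE} and the index-zero Fredholm property make $\exp_e$ a local diffeomorphism near $v_1$ and near $v_2$; let $\zeta\mapsto v_i(\zeta)$ be the corresponding local inverses. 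For small $t>0$ I would move $\eta$ to a point $\eta_t$ near $\exp_\eta(t\mathcal W)$ chosen to lie exactly on the smooth hypersurface $\{\zeta:\|v_1(\zeta)\|_{L^2}=\|v_2(\zeta)\|_{L^2}\}$ (implicit function theorem). The first variation formula \eqref{eq:var-1} then gives $\|v_i(\eta_t)\|_{L^2}=d-(\delta/d)t+o(t)$ for both $i$, so $v_1(\eta_t)\neq v_2(\eta_t)$ are vectors of equal $L^2$ norm, strictly below $d$, both mapped by $\exp_e$ to $\eta_t$, while $\mathrm{dist}_{L^2}(e,\eta_t)<d$. If the two geodesics $s\mapsto\exp_e\bigl(s\,v_i(\eta_t)\bigr)$ are minimizing, then $\eta_t$ is a cut point of $e$ (again because broken minimizers are impossible) carrying two distinct minimizing geodesics at distance $<d$, contradicting the minimality of $\eta$.

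The main obstacle is precisely that last point. In finite dimensions it is automatic, since $\exp_p$ is injective on the entire open ball of radius equal to the distance to the \emph{nearest} cut point; here $\eta$ is only the nearest cut point carrying two minimizing geodesics, so one only has injectivity of $\exp_e$ on the set of minimizing directions of norm $<d$ (two equal-length minimizing geodesics below level $d$ to a common point would violate minimality of $\eta$). Showing the perturbed geodesics stay minimizing---heuristically clear, since moving along $\mathcal W$ retreats from the cut value along both $\sigma_i$, so their cut distances drop no faster than $o(t)$ while their lengths drop at the linear rate $\delta/d$---is exactly where the classical proof extracts convergent subsequences of minimizing geodesics. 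I would replace that step by either a semicontinuity property of the $L^2$ cut-time function along the family $t\mapsto v_i(\eta_t)$, or a direct argument: any geodesic from $e$ to $\eta_t$ shorter than $\exp_e(\cdot\,v_1(\eta_t))$ has initial speed bounded away from $0$, its (sub)limiting initial velocities as $t\to0$ are minimizing directions of $\eta$, and the local-diffeomorphism property near such a direction forces the competitor to coincide with one of the $\exp_e(\cdot\,v_j(\eta_t))$---a contradiction. A secondary point: if $\eta$ carries additional minimizing geodesics one selects two whose terminal velocities are extremal enough that $\mathcal W$ remains a common obtuse direction, the impossibility of $0$ lying in the closed convex hull of all terminal velocities emerging from the same contradiction, exactly as the classical argument ends by finding there are precisely two minimizing geodesics.
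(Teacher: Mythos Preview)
Your approach is the paper's, namely Klingenberg's argument: assuming $w_1\neq -w_2$, pick a direction making an obtuse angle with both terminal velocities, perturb $\eta$, obtain two strictly shorter geodesics to a nearby point, and contradict the minimality of $d$. The paper chooses any $X\in T_\eta\mathscr{D}_\mu^s$ with $\langle X,\dot\gamma_i(1)\rangle_{L^2}<0$ and simply orders the two resulting lengths, concluding that the longer perturbed geodesic already carries a cut point at distance $<d$; it does not introduce your implicit-function step forcing $\|v_1(\eta_t)\|_{L^2}=\|v_2(\eta_t)\|_{L^2}$, nor does it pause over whether the perturbed geodesics remain minimizing---the contradiction is dispatched in a single line. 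Your explicit discussion of that issue is a genuine elaboration on what the paper actually writes.

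The one substantive divergence is the closing-up step. You propose to rerun the argument based at $\eta$ to deduce $v_1=-v_2$; for that you would need $e$ to realize the analogous minimal distance to the cut locus of $\eta$, which does follow from right-invariance of the $L^2$ metric but which you do not invoke. The paper takes a different and shorter route: having established $\dot\gamma_1(1)=-\dot\gamma_2(1)$, it appeals to property~(iv) of \S\ref{sec:KE} (Ebin's result that $L^2$ geodesics in $\mathscr{D}_\mu^s(M)$ have no self-intersections) to conclude directly that the concatenated loop is a closed geodesic, bypassing any second application of the perturbation argument.
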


\begin{proof} 
Let $v_1$ and $v_2$ be two distinct vectors in $\mathrm{exp}_e^{-1}(\eta)$ of the smallest norm, say 
$\|v_1\|_{L^2} = \|v_2\|_{L^2} = 1$ 
and let 
$\gamma_1(t) = \mathrm{exp}_e{tv_1}$ 
and 
$\gamma_2(t) = \mathrm{exp}_e{tv_2}$ 
be the corresponding (minimal) geodesics joining $e$ to $\eta$ in $\mathscr{D}_\mu^s(M)$. 
This implies that $\eta$ must be the first cut point of $e$ along each geodesic 
(by any reasonable definition). 

We claim that $\dot{\gamma}_1(1) = -\dot{\gamma}_2(1)$. 
Observe that since $\eta$ is not monoconjugate to $e$ along either geodesic 
there exist disjoint open sets $\mathcal{V}_1$ and $\mathcal{V}_2$ in $T_e\mathscr{D}_\mu^s$ 
containing $\dot{\gamma}_1(0)$ and $\dot{\gamma}_2(0)$, respectively, 
such that 
$\mathrm{exp}_e\vert_{\mathcal{V}_1}$ and $\mathrm{exp}_e\vert_{\mathcal{V}_2}$
are diffeomorphisms onto some open neighbourhoods 
$\mathcal{W}_1$ and $\mathcal{W}_2$ of $\eta$. 
This follows from the inverse function theorem and the fact that $\mathrm{exp}_e$ is a Fredholm map of index zero, 
which implies that its derivative $d\,\mathrm{exp}_e(v_i)$ is a continuous linear bijection 
and hence an isomorphism of Banach spaces 
$T_e\mathscr{D}_\mu^s$ and $T_{\mathrm{exp}_e{v_i}}\mathscr{D}_\mu^s$ for each $i=1,2$. 

\begin{figure}[h!]
   \centering
\includegraphics[width=0.45\textwidth]{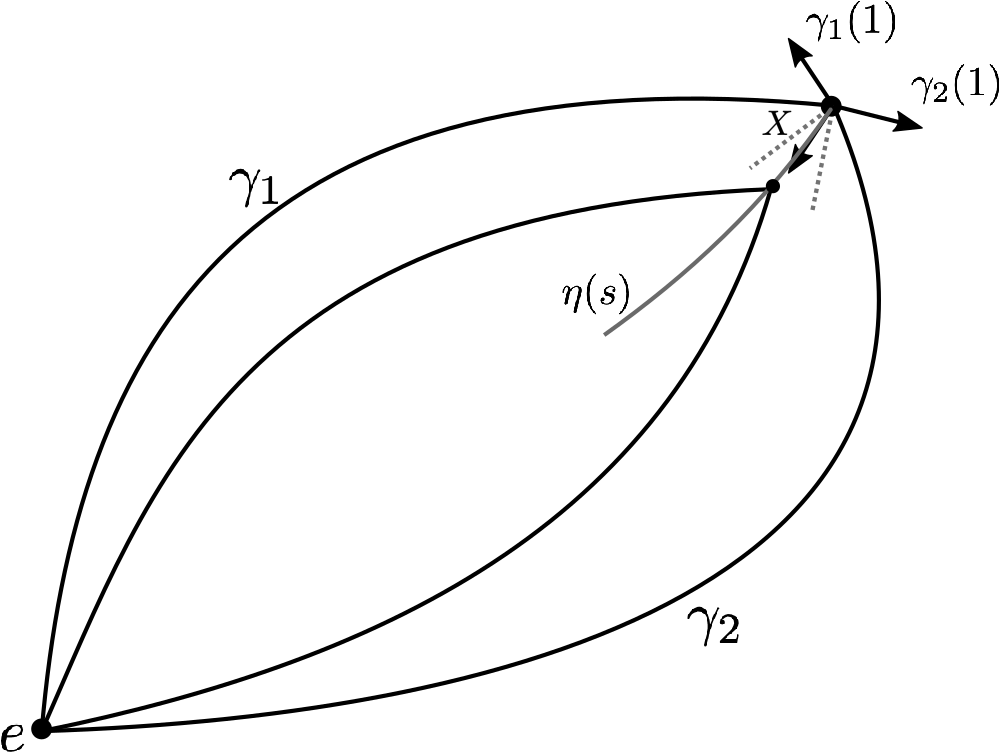}
    \caption{Illustration to the proof of Theorem \ref{thm:cut}. Grey dashed lines are perpendicular to 
                  the vectors $\gamma_1(1)$ and $\gamma_2(1)$.}   \label{fig:thm3}
\end{figure} 

Now, suppose that 
$\dot{\gamma}_1(1) \neq - \dot{\gamma}_2(1)$. 
Then we can pick a vector $X$ in $T_\eta\mathscr{D}_\mu^s$ such that 
$\langle X, \dot{\gamma}_1(1) \rangle_{L^2} < 0$ 
and 
$\langle X, \dot{\gamma}_2(1)\rangle_{L^2} < 0$ 
and let $\eta(s)$ be the $L^2$ geodesic with $\eta(0)=\eta$ and $\dot{\eta}(0)=X$. 
It follows that for a sufficiently small $s>0$ the point $\eta(s)=\mathrm{exp}_{\eta}{sX}$ 
must belong to $\mathcal{W}_1 \cap \mathcal{W}_2$ so that 
$$ 
w_{1,s} = \mathrm{exp}_e|_{\mathcal{V}_1}^{-1}(\eta(s)) 
\neq 
\mathrm{exp}_e|_{\mathcal{V}_2}^{-1}(\eta(s)) = w_{2,s} 
$$ 
in $T_e\mathscr{D}_\mu^s$. 
Therefore, we can define two one-parameter families of $L^2$ geodesics 
$\gamma_1(s,t) = \mathrm{exp}_e{tw_{1,s}}$ 
and 
$\gamma_2(s,t) = \mathrm{exp}_e{tw_{2,s}}$ 
from $e$ to $\eta(s)$ such that 
$\gamma_1(0,t)=\gamma_1(t)$ and $\gamma_2(0,t)=\gamma_2(t)$. 
From the first variation formula for the $L^2$ length functional 
(alternatively, one could work with the energy functional) 
we obtain 
\begin{align*} 
\frac{d}{ds}\Big|_{s=0}\mathcal{L}(\gamma_1(s)) 
&= 
\|\dot{\gamma}_1(0)\|_{L^2}^{-1} 
\bigg\{ \big\langle \frac{d\gamma_1}{ds}(0,t), \dot{\gamma}_1(t) \big\rangle_{L^2} \Big|_{t=0}^{t=1} 
- 
\int_0^1 \big\langle \frac{d\gamma_1}{ds}(0,t), \nabla_{\dot{\gamma}_1}\dot{\gamma}_1 (t) \big\rangle_{L^2} dt \bigg\} 
\\ 
&= 
 \langle X, \dot{\gamma}_1(1)\rangle_{L^2} < 0 
\end{align*} 
and similarly 
$ 
\frac{d}{ds}\big|_{_{s=0}}\mathcal{L}(\gamma_2(s)) < 0 
$ 
so that (for sufficiently small $s>0$) we have e.g., 
$$ 
\mathcal{L}(\gamma_1(s)) \leq \mathcal{L}(\gamma_2(s)) 
< 
\mathcal{L}(\gamma_1) = \mathcal{L}(\gamma_2) 
$$ 
which implies that there must be a cut point along $\gamma_2(s)$ 
that lies somewhere between $e$ and $\eta(s)$. 
However, since $\mathcal{L}(\gamma_2(s)) < \mathcal{L}(\gamma_1)$ 
this contradicts the fact that $\eta$ is a point which realizes the $L^2$ distance from $e$ to its cut locus. 

Finally, 
the theorem follows from the fact that there are no self-intersecting $L^2$ geodesics 
in $\mathscr{D}_\mu^s(M)$, see (iv) above and also \cite{smo}. 
\end{proof} 
%  

%%%%%%%%%%%%%%%%
%%%%%%%%%%%%%%%%%%%%%%%%%%%%%%%%%%
\bibliographystyle{amsplain}

\end{document}